\newtheorem{theorem}{Theorem}[section]
\newtheorem{proposition}[theorem]{Proposition}
\theoremstyle{definition}
\newtheorem{definition}[theorem]{Definition}
\theoremstyle{remark}
\newtheorem{remark}[theorem]{Remark}
\numberwithin{equation}{section}
\newcommand{\Z}{\mathbb Z}
\newcommand{\cA}{\mathcal{A}}
\newcommand{\cD}{\mathcal{D}}
\newcommand{\bA}{\mathbf{A}}
\newcommand{\tvert}{\vert\!\vert\!\vert}
\newcommand{\bv}{\mathbf{v}}
\newcommand{\bF}{\mathbf{f}}
\newcommand{\bu}{\mathbf{u}}
\newcommand{\tol}{{\varepsilon}}
\newenvironment{algotab}%
{\par\begin{samepage}%
\begin{tabbing}\ttfamily%
 \hspace*{5mm}\=\hspace{3ex}\=\hspace{3ex}\=\hspace{3ex}\=\hspace{3ex}%
\=\hspace{3ex}\=\hspace{3ex}\=\hspace{3ex}\=\hspace{3ex}\kill}%
{\end{tabbing}\end{samepage}}
\newcommand{\wt}{\widetilde}
\newcommand{\ps}{{\phi^*}}
\date{\today}
\title{{\bf Adaptive Spectral Galerkin Methods \\ with Dynamic Marking}}
\author{
Claudio~Canuto\thanks{Dipartimento di Scienze Matematiche,
Politecnico di Torino,
Corso Duca degli Abruzzi 24,
I-10129 Torino, Italy (claudio.canuto@polito.it )} 
\and
Ricardo~H.~Nochetto%
\thanks{Department of Mathematics and Institute for Physical Science
and Technology, University of Maryland, College Park, Maryland 20742, USA
(rhn@math.umd.edu)}
\and Rob~Stevenson%
\thanks{Korteweg-de Vries Institute for Mathematics,
University of Amsterdam,
P.O. Box 94248,
1090 GE Amsterdam, The Netherlands
(r.p.stevenson@uva.nl)}
\and Marco~Verani\thanks{MOX-Dipartimento di Matematica, Politecnico di Milano, P.zza Leonardo Da Vinci 32, I-20133 Milano, Italy (marco.verani@polimi.it).}}
\begin{document}
\maketitle

\abstract{
The convergence and optimality theory of adaptive Galerkin methods is
almost exclusively based on the D\"orfler marking. This entails a fixed
parameter and leads to a contraction constant bounded below away from
zero. For spectral
Galerkin methods this is a severe limitation which affects performance. We
present a dynamic marking strategy that allows for a super-linear
relation between consecutive discretization errors, and show 
exponential convergence with linear computational
  complexity whenever the solution belongs to a Gevrey
    approximation class.
}

\section{Introduction} 

The modern analysis of adaptive discretizations of partial
differential equations aims at establishing rigorous results of
\emph{convergence} and \emph{optimality}. The former results
concern the convergence of the approximate solutions produced by the
successive iterations of the adaptive algorithm towards the exact
solution $u$, with an estimate of the error decay rate measured
in an appropriate norm. On the other hand, optimality results
compare the cardinality of the active set of basis functions used to expand
the discrete solution to the minimal cardinality
needed to approximate the exact solution with similar accuracy;
this endeavor borrows ideas from Nonlinear Approximation
theory. Confining ourselves in the sequel to second-order elliptic
boundary value problems, such kind of analysis has been carried out
first for wavelet discretizations \cite{CDDV:98,
  Gantumur-Stevenson:2007}, then for $h$-type finite elements
\cite{MNS:00, Stevenson:07,CaKrNoSi:08,CFPP:14,DKS:15}, \cite{MNS:00} dealing just
with convergence, and more recently for spectral-type methods
\cite{CaNoVe:14a, CaNoVe:14b, CaSiVe:14-2}; we refer to the surveys
\cite{CaNoStVe:14c,CaVe:13,NoSiVe:09,Stevenson:09}.
In contrast, the state of the art for
$hp$-type finite elements is still in evolution; see
\cite{DoerflerHeuveline:2007, DoerflerBuerg:2011} and the more recent
paper \cite{CaNoStVe:15} which includes optimality estimates.

For all these cases, convergence is proven to be \emph{linear}, i.e., a certain expression controlling the error (a norm, or a combination of norm and estimator) contracts with some fixed parameter $\rho<1$ from one iteration to the next one, e.g., $\Vert u - u_{k+1} \Vert \leq \rho  \Vert u - u_{k} \Vert $. This is typically achieved if the adaptation strategy is based on some form of \emph{D\"orfler marking} (or \emph{bulk chasing}) with fixed parameter $\theta<1$: assuming that $\sum_{i \in {\cal I}} \eta_i^2$ is some additive error estimator at iteration $k$, one identifies a minimal subset ${\cal I}' \subset {\cal I}$ such that
$$
 \sum_{i \in {\cal I}'} \eta_i^2 \geq \theta^2 \sum_{i \in {\cal I}} \eta_i^2
$$ 
and utilizes ${\cal I}'$ for the construction of the new
discretization at iteration $k+1$. 
For wavelet or $h$-type fem discretizations, optimality is guaranteed by performing cautious successive adaptations, i.e., by choosing a moderate value of
$\theta$, say $0 < \theta \leq \theta_{\max } <1$ \cite{Stevenson:07}.
This avoids the need of cleaning-up the discrete solution from time to
time, by subjecting it to a \emph{coarsening} stage. 

On the
other hand, the resulting contraction factor $\rho=\rho(\theta)$ turns
out to be bounded from below by a positive constant, say $0 <
\rho_{\min } \leq \rho<1$ (related to the `condition number' of the
exact problem), regardless of the choice of $\theta$.
This entails a limitation on the speed of
convergence for infinite-order methods \cite{CaNoVe:14a,CaNoVe:14b},
but is not restrictive for fixed-order methods \cite{Stevenson:07,CaKrNoSi:08}.

It has been shown in \cite{CaNoVe:14a} that such an obstruction can be
avoided if a specific property of the differential operator
holds, namely the so-called \emph{quasi-sparsity} of the inverse of the associated stiffness matrix. Upon exploiting this information, a more aggressive marking strategy can be adopted, which judiciously enlarges the set ${\cal I}'$ coming out of D\"orfler's stage. The resulting contraction factor $\rho$ can be now made arbitrarily close to $0$ by choosing $\theta$ arbitrarily close to $1$.
 
When a method of spectral type is used, one expects a fast (possibly,
exponentially fast) decay of the discretization error for smooth
solutions. In such a situation, a slow convergence of the iterations
of the adaptive algorithm would endanger the overall performance of the
method; from this perspective, it is useful to be able to make the contraction factor as close to 0 as desired. Yet, linear convergence of the adaptive iterations is not enough to guarantee the optimality of the method. Let us explain why this occurs, and why a \emph{super-linear} convergence is preferable, using the following idealized setting. 

As customary in Nonlinear Approximation, we consider the
\emph{best $N$-term approximation error} $E_N(u)$ of the exact
solution $u$, in a suitable norm, using combinations of at most $N$
functions taken from a chosen basis. We prescribe a decay law of
$E_N(u)$ as $N$ increases, which classically for
fixed-order approximations is {\it algebraic} and reads
\begin{equation}\label{alg-decay}
\sup_N N^s E_N(u) <\infty,
\end{equation}
for some positive $s$. 
However, for infinite-order methods such as spectral approximations an
{\it exponential} law is relevant that reads
\begin{equation}\label{exp-decay}
\sup_N {\rm e}^{\eta N^\alpha} E_N(u) <\infty
\end{equation}
for some $\eta>0$ and $\alpha \in (0,1]$, where $\alpha<1$ accommodates the inclusion of $C^\infty$-functions that are not analytic.
This defines corresponding algebraic and exponential
\emph{sparsity classes} for the exact solution $u$.
These classes are related to Besov and Gevrey regularity of $u$ respectively.

We now assume the
ideal situation that at each iteration of our adaptive algorithm
\footnote{
  Throughout the paper, we write $A_k \lesssim B_k$ to indicate that
  $A_k$ can be bounded by a multiple of $B_k$, independently of the iteration counter $k$
  and other parameters which $A_k$ and $B_k$ may depend on; $A_k\eqsim B_k$
  means $A_k\lesssim B_k$ and $B_k \lesssim A_k$. }
\begin{equation}\label{ideal-decay}
\Vert u - u_{k} \Vert \eqsim   N_k^{-s}
\qquad\textrm{or}\qquad
\Vert u - u_{k} \Vert \eqsim  e^{-\eta N_k^\alpha},
\end{equation}
where $N_k$ is the cardinality of
the discrete solution $u_k$, i.e., the dimension of the
approximation space activated at iteration $k$.  We assume
in addition that the error decays
linearly from one iteration to the next, i.e., it satisfies precisely
\begin{equation}\label{ideal-contraction}
\Vert u - u_{k+1} \Vert = \rho \,  \Vert u - u_{k} \Vert.
\end{equation}

If $u$ belongs to a sparsity class of algebraic type, then one easily
gets $N_k \eqsim \rho^{-k/s}$, i.e., cardinalities grow exponentially fast and
\[
\Delta N_k := N_{k+1}-N_k \eqsim N_k
\eqsim \|u-u_k\|^{-1/s},
\]
i.e., the increment of cardinality between consecutive iterations is
proportional to the current cardinality as well as to the error raised to the power $-1/s$. The important message stemming from this ideal setting is
that for a {\em practical} adaptive algorithm 
one should be able to derive the estimates
$\Vert u - u_{k+1} \Vert \leq  \rho \,  \Vert u - u_{k} \Vert$ and
$\Delta N_k \lesssim \|u-u_k\|^{-1/s}$, because they yield
\[
N_n=\sum_{k=0}^{n-1}\Delta N_k \lesssim \sum_{k=0}^{n-1}\|u-u_k\|^{-1/s}
\le \|u-u_n\|^{-1/s} \sum_{k=0}^{n-1} \rho^{(n-k)/s}
\lesssim \|u-u_n\|^{-1/s}.
\]
This geometric-series argument is precisely the strategy used in
\cite{Stevenson:07, CaKrNoSi:08} and gives an estimate similar to \eqref{alg-decay}.
The performance of a practical adaptive algorithm is thus quasi-optimal.

If $u$ belongs to a sparsity class of exponential type, instead, the
situation changes radically. In fact, assuming
\eqref{ideal-decay} and \eqref{ideal-contraction}, one has $e^{-\eta N_k^\alpha} \eqsim \rho^k$, and so
\[
\lim_{k \rightarrow \infty} k^{-1/\alpha} N_k=
\big(|\log \rho|/\eta\big)^{1/\alpha},
\]
i.e., the cardinality $N_k$ grows polynomially.
For a practical adaptive algorithm, proving such a growth
is very hard if not impossible. This obstruction has
motivated the insertion of a coarsening stage in the adaptive
algorithm 
presented in \cite{CaNoVe:14a}.
Coarsening removes the negligible components of the discrete solution
possibly activated by the marking strategy and guarantees that the
final cardinality is nearly optimal \cite{CDDV:98,CaNoVe:14a},
but it does not account for the workload to create $u_k$.

One of the key points of the present contribution is the observation
that if the convergence of the adaptive algorithm is super-linear,
then one is back to the
simpler case of exponential growth of cardinalities which
is ameanable to a sharper performance analysis. To see this,
let us assume a super-linear
relation between consecutive errors:
\begin{equation}\label{ideal-superlinear}
\Vert u - u_{k+1} \Vert =  \Vert u - u_{k} \Vert^q
\end{equation}
for some  $q>1$. If additionally
$u_k$ satisfies \eqref{ideal-decay}, then one infers that
${\rm e}^{-\eta N_{k+1}^\alpha} \eqsim {\rm e}^{-\eta q N_{k}^\alpha}$, whence
\[
\lim_{k \rightarrow \infty} \frac{\Delta N_k}{N_k} =q^{1/\alpha}-1,
\qquad
\lim_{k \rightarrow \infty} \frac{|\log\|u-u_k\||^{1/\alpha}}{N_k} =\eta^{1/\alpha},
\]
the latter being just a consequence of \eqref{ideal-decay}.
This suggests that the geometric-series argument may be invoked again in the
optimality analysis of the adaptive algorithm.

This ideal setting does not apply directly to our
{\em practical} adaptive algorithm. We will be able to prove
estimates that are consistent with
the preceding derivation to some extend, namely
\[
\Vert u - u_{k+1} \Vert \leq  \Vert u - u_{k} \Vert^q,
\qquad \Delta N_k\le Q |\log\|u-u_k\||^{1/\bar{\alpha}},
\]
with constants $Q>0$ and $\bar{\alpha} \in (0,\alpha]$.
Invoking $\|u-u_n\| \leq \|u-u_k\|^{q^{n-k}}$, we then realize that
$$
N_n =
\sum_{k=0}^{n-1} \Delta N_k \le
Q \sum_{k=0}^{n-1} 
\big|\log\|u-u_k\| \, \big|^{1/\bar{\alpha}}
\leq 
\frac{Q q^{1/\bar{\alpha}}}{q^{1/\bar{\alpha}}-1} 
\big|\log\|u-u_n\|\, \big|^{1/\bar{\alpha}}.
$$
Setting $\bar{\eta}:=\big(\frac{Q
  q^{1/\bar{\alpha}}}{q^{1/\bar{\alpha}}-1}\big)^{-\bar{\alpha}}$,
we deduce the estimate
\[
\sup_n {\rm e}^{\bar{\eta} N_n^{\bar{\alpha}}} \|u-u_n\| \leq 1,
\]
which is similar to \eqref{exp-decay}, albeit with different class parameters.
The most important parameter is $\bar{\alpha}$. Its possible
degradation relative to $\alpha$
is mainly caused by the fact that the residual, the only
computable quantity accessible to our practical algorithm, belongs to
a sparsity class with a main parameter generally smaller than
that of the solution $u$.
This perhaps unexpected property is
typical of the exponential class and has
been elucidated in \cite{CaNoVe:14a}.
  
In order for the marking strategy to guarantee super-linear
convergence, one needs to adopt a dynamic choice of D\"orfler's
parameter $\theta$, which pushes its value towards $1$ as the
iterations proceed. We accomplish this requirement by equating the
quantity $1-\theta_k^2$ to some function of the dual norm of the
residual $r_k$, which is monotonically increasing and
vanishing at the origin.
This defines our \emph{dynamic marking strategy}.  The order of the
root at the origin dictates the exponent $q$ in the super-linear
convergence estimate of our adaptive algorithm.

The paper is organized as follows. In Sect. 2 we introduce the model
elliptic problem and its spectral Galerkin approximation based on
either multi-dimensional Fourier or (modified) Legendre expansions.
In particular, we highlight properties of the resulting
stiffness matrix that will be fundamental
in the sequel. We present the adaptive algorithm in
Sect. 3, first for the static marking ($\theta$ fixed) and
later for the dynamic marking ($\theta$ tending towards 1); super-linear
convergence is proven. With the optimality analysis in mind, we next
recall in Sect. 4 the definition and crucial properties of a family of
sparsity classes of exponential type, related to Gevrey
regularity of the solution, and we investigate how the sparsity class
of the Galerkin residual deteriorates relative to that of the exact
solution. Finally, in Sect. 5 we relate the cardinality of the
adaptive discrete solutions, as well as the workload needed to compute
them, to the expected accuracy of the approximation. Our analysis
confirms that the proposed dynamic marking strategy avoids any form of
coarsening, while providing exponential convergence
with linear computational complexity, assuming optimal linear solvers. 

\section{Model Elliptic Problem and Galerkin Methods}

Let $d \geq 1$ and consider the following elliptic PDE in a $d$-dimensional rectangular domain $\Omega$
with periodic  or homogeneous Dirichlet boundary conditions:
\begin{equation}\label{eq:four03}
{ L}u=-\nabla \cdot (\nu \nabla u)+ \sigma u = f \qquad \text{in } \Omega ,
\end{equation}
where $\nu$ and $\sigma$ are sufficiently smooth real coefficients satisfying 
$0 < \nu_* \leq \nu(x) \leq \nu^* < \infty$ and $0 < \sigma_* \leq \sigma(x) \leq \sigma^* < \infty$
in $\Omega$; let us set
$$
\alpha_* = \min(\nu_*, \sigma_*) \qquad \text{and} \qquad \alpha^* = \max(\nu^*, \sigma^*) \;.
$$
Let $V$ be equal to $H^1_0(\Omega)$ or $H^1_p(\Omega)$ depending on the boundary conditions and denote by $V^*$ its dual space. We formulate \eqref{eq:four03} variationally as
\begin{equation}\label{weak}
u \in V \ \ : \quad a(u,v)= \langle f,v \rangle \qquad \forall v \in  V \;,
\end{equation}
where $a(u,v)=\int_\Omega \nu \nabla u \cdot \nabla \bar{v} + \int_\Omega \sigma u \bar{v}$ (bar 
indicating as usual complex conjugate). We denote by 
$\tvert v \tvert = \sqrt{a(v,v)}$
the energy norm of any $v \in V$, which satisfies 
\begin{equation}\label{eq:four.1bis}
\sqrt{\alpha_*}  \Vert v \Vert_V  \leq \tvert v \tvert \leq 
\sqrt{\alpha^*}  \Vert v \Vert_V \;.
\end{equation}
\subsection{Riesz Basis}
We start with an abstract formulation which encompasses the two
examples of interest: trigonometric functions and Legendre polynomials.
Let $\phi=\{\phi_k \, : \, k \in {\cal K}\}$ be a Riesz basis of $V$. Thus, we assume 
the following relation between a function
$v = \sum_{k \in {\cal K}} \hat{v}_k \phi_k\in V$ and its coefficients:
\begin{equation}\label{eq:propNOBS.3}
\Vert v \Vert_{V}^2  \simeq \sum_{k \in {\cal K}}
|\hat{v}_k|^2 d_k=:\Vert v \Vert_\phi^2 \; ,
\end{equation}
for suitable weights $d_k>0$.
{
Correspondingly, any element $f \in V^*$ can be expanded along the 
{\sl dual basis} $\phi^*=\{\phi_k^*\}$ as 
$f = \sum_{k \in {\cal K}} \hat{f}_k \phi_k^*$, with $\hat{f}_k = \langle f,\phi_k \rangle$,
yielding the dual norm representation
\begin{equation}\label{eq:propNOBS.4}
\Vert f \Vert_{V^*}^2  \ \simeq \  \sum_{k \in {\cal K}} |\hat{f}_k|^2 d_k^{-1}=:\Vert v \Vert_{\phi*}^2\;.
\end{equation}
For future reference, we introduce the vectors $\bv = (\hat v_k d_k^{1/2})_{k\in {\cal K}}$
and $\bF = (\hat f_k d_k^{-1/2})_{k\in {\cal K}}$ as well as the
constants $\beta_*\le\beta^*$ of the norm equivalence in \eqref{eq:propNOBS.3}
\begin{equation}\label{eq:propNOBS.7}
\beta_* \Vert v \Vert_{V} \leq \Vert v \Vert_\phi = \|\bv\|_{\ell^2} \leq \beta^* \Vert v \Vert_{V}
\qquad \forall v \in V \;.
\end{equation}
This implies
\begin{equation}\label{eq:propNOBS.8}
\frac1{\beta^*} \Vert f \Vert_{V^*} \leq \Vert f \Vert_{\phi^*} =
\|\bF\|_{\ell^2} \leq \frac1{\beta_*} \Vert f \Vert_{V^*}
\qquad \forall f \in V^* \;.
\end{equation}
}

The two key examples
to keep in mind are trigonometric basis and tensor products of
Babu\v ska-Shen basis. We discuss them briefly below.

\medskip
{\bf Trigonometric basis}.
Let $\Omega=(0,2\pi)^d$ and the trigonometric basis be
$
\phi_k(x)=\frac1{(2\pi)^{d/2}} \, {\rm e}^{i k \cdot x}
$
for any $k \in \mathcal{K}=\mathbb{Z}^d$ and $x \in \Omega$. 
Any function $v\in L^2(\Omega)$ can be expanded in terms of
$\{\phi_k\}_{k\in\mathbb{Z}^d}$ as follows:
\begin{equation}\label{fourier-exp}
v = \sum_k \hat{v}_k \phi_k \;, \qquad \hat{v}_k=\langle v,\phi_k\rangle \;, 
\qquad \ \Vert v \Vert_{L^2(\Omega)}^2= \sum_k |\hat{v}_k|^2 \;.
\end{equation}
The space $V := H^1_p(\Omega)$ of periodic functions with square
integrable weak gradient
can now be easily characterized as the subspace of those $v \in L^2(\Omega)$ for which
$$
\Vert v \Vert_V^2 =
\Vert v \Vert_{H^1_p(\Omega)}^2 = \sum_k |\hat{V}_k|^2 <\infty 
\qquad (\text{where }\hat{V}_k := \hat{v}_k d_k^{1/2}, \text{ with }{d_k}:=1+|k|^2).
$$
This induces an {\it isomorphism} between $H^1_p(\Omega)$ and $\ell^2(\mathbb{Z}^d)$:
for each $v \in H^1_p(\Omega)$ let $\bv=(\hat{V}_k)_{k \in {\cal K}} \in\ell^2(\mathbb{Z}^d)$ and
note that $\|v\|_{H^1_p(\Omega)} = \|\bv\|_{\ell^2}$.
Likewise, the dual space
$H^{-1}_p(\Omega)=(H^1_p(\Omega))'$ is characterized as the space of those functionals $f$ for which 
$$
\Vert f \Vert_{V^*}^2 =
\Vert f \Vert_{H^{-1}_p(\Omega)}^2 = \sum_k |\hat{F}_k|^2
\qquad\text{with}\quad
\hat{F}_k := \hat{f}_k d_k^{-1/2}.
$$
We also have an isomorphism between $H^{-1}_p(\Omega)$ and $\ell^2(\mathbb{Z}^d)$
upon setting $\bF=(\hat{F}_k)_{k \in {\cal K}}$ for $f\in H^{-1}_p(\Omega)$ and
realizing that $\|f\|_{H^{-1}_p(\Omega)} = \|\bF\|_{\ell^2}$.

\medskip
{\bf Babu\v ska-Shen basis}.
Let us start with the one-dimensional case $d=1$. Set $I=(-1,1)$, $V:=H^1_0(I)$, and let 
$L_k({x})$, $k \geq 0$, stand for the $k$-th Legendre orthogonal polynomial in $I$, 
which satisfies  ${\rm deg}\, L_k = k$, $L_k(1)=1$ and
\begin{equation}\label{eq:Leg-ort}
\int_I L_k({x}) L_m({x}) \, d{x} = \frac2{2k+1}\, \delta_{km}\;, \qquad m \geq 0 \;.
\end{equation}
The natural modal basis in $H^1_0(I)$ is the {\sl Babu\v ska-Shen basis} (BS basis), whose elements are defined as
\begin{equation}\label{eq:defBS}
\eta_k({x})=\sqrt{\frac{2k-1}2}\int_{{x}}^1 L_{k-1}(s)\,{d}s =
\frac1{\sqrt{4k-2}}\big(L_{k-2}({x})-L_{k}({x})\big)\ , 
 \qquad k \geq 2\;.
\end{equation}
The basis elements satisfy ${\rm deg}\, \eta_k = k$ and
\begin{equation}\label{eq:propBS.1}
(\eta_k,\eta_m)_{H^1_0({I})} = 
\int_I \eta_k'({x}) \eta_m'({x}) \, d{x} = \delta_{km}\;, \qquad k,m \geq 2 \;,
\end{equation}
i.e., they form an orthonormal basis for the ${H^1_0({I})}$-inner product. Equivalently, 
the (semi-infinite) stiffness matrix ${S}_\eta$ of the Babu\v ska-Shen basis with respect to this inner product is the identity matrix. 

We now consider, for simplicity, the two-dimensional case $d=2$ since
the case $d>2$ is similar. Let $\Omega=(-1,1)^2$, $V=H^1_0(\Omega)$,
and consider the {\sl tensorized Babu\v ska-Shen basis}, 
whose elements are defined as
\begin{equation}\label{eq:defBS.2}
\eta_k(x) = \eta_{k_1}(x_1) \eta_{k_2}(x_2)\;, \qquad k_1, k_2 \geq 2 \;, 
\end{equation}
where we set $k=(k_1,k_2)$ and $x=(x_1, x_2)$; indices vary in the set
${\cal K}=\{k \in \mathbb{N}^2 \, : \, k_i \geq 2 \text{ for }i=1,2 \}$,
which is ordered `a la Cantor' by increasing total degree $k_\text{tot}=k_1+k_2$ and, for the same total degree, by increasing $k_1$. 
The tensorized BS basis is no longer orthogonal, since
\begin{equation}\label{eq:orthogonal}
(\eta_k,\eta_m)_{H^1_0(\Omega)} = (\eta_{k_1},\eta_{m_1})_{H^1_0({I})}(\eta_{k_2},\eta_{m_2})_{L^2({I})}+
 (\eta_{k_1},\eta_{m_1})_{L^2({I})}(\eta_{k_2},\eta_{m_2})_{H^1_0({I})} \;,
\end{equation}
whence $ (\eta_k,\eta_m)_{H^1_0(\Omega)} \not = 0$ if and only if $k_1=m_1$ and $k_2-m_2 \in \{-2,0,2\}$, or
$k_2=m_2$ and $k_1-m_1 \in \{-2,0,2\}$. Obviously, we cannot have a Parseval representation of the  $H^1_0(\Omega)$-norm of $v = \sum_{k \in {\cal K}} \hat{v}_k \eta_k$ in terms of the coefficients  $\hat{v}_k$. 
With the aim of getting \eqref{eq:propNOBS.3}, we follow
\cite{CaSiVe:14-2} and we first perform the orthonormalization of the BS
basis via a Gram-Schmidt procedure. This allows us to build a sequence of functions
\begin{equation}\label{eq:defOBS}
\Phi_k = \sum_{m \leq k}  g_{mk} \eta_m \;,
\end{equation}
such that  $g_{kk}\not = 0$ and 
$$
(\Phi_k,\Phi_m)_{H^1_0(\Omega)} = \delta_{km} \qquad \forall \ k, m \in {\cal K}\;.
$$
We will refer to the collection $\Phi:=\{\Phi_k:\ k \in\mathcal{K}\}$
as the {\sl orthonormal Babu\v ska-Shen basis} (OBS basis), for which
the associated stiffness matrix ${S}_\Phi$ with respect to the $H^1_0(\Omega)$-inner product is the identity matrix.
Equivalently, if ${G}=(g_{mk})$ is the upper triangular matrix which collects the coefficients generated by the
Gram-Schmidt algorithm above, one has
\begin{equation}\label{eq:propOBS.1}
{G}^T {S}_\eta {G} = {S}_\Phi = {I} \;,
\end{equation}
that is the validity of  \eqref{eq:propNOBS.7} with $d_k=1$.
However, unlike ${S}_\eta$, which is very sparse, the upper
triangular matrix ${G}$ is full; in view of this, we next apply a
thresholding procedure to wipe-out a significant portion of the non-zero entries sitting in the leftmost columns of ${G}$.
This  leads to a modified basis whose computational efficiency is quantitatively improved,
without significantly deteriorating the properties of the OBS basis. To be more precise, we
use the following notation: $G_t$ indicates the matrix obtained from
$G$ by setting to zero a certain finite set of off-diagonal entries, so that in particular
${\rm diag}(G_t)= {\rm diag}(G)$; correspondingly, $E:=G_t-G$
is the matrix measuring the truncation quality, for which ${\rm diag}(E)=0$. 
Finally, we introduce the matrix 
\begin{equation}\label{eq:defNOBS:1}
{S}_\phi = {G}^T_t {S}_\eta {G}_t 
\end{equation}
which we interpret as the stiffness matrix associated to the modified BS basis defined in analogy to \eqref{eq:defOBS} as 
\begin{equation}\label{eq:defNOBS}
\phi_k = \sum_{m \in \mathcal{M}_t(k)}  g_{mk} \eta_m \;
\end{equation} 
where $\mathcal{M}_t(k)=\{m \leq k: E_{mk}=0\}$. This forms a new basis in
$H^1_0(\Omega)$ (because $k \in \mathcal{M}_t(k)$ and $g_{kk}\not =0$).
We will term it a {\sl nearly-orthonormal Babu\v ska-Shen basis} (NOBS basis).
Note that only the basis functions $\phi_k$ having total degree not exceeding a certain value, say $p$, may be affected by  the compression, while all the others  coincide with the corresponding orthonormal basis functions $\Phi_k$ defined in \eqref{eq:defOBS}. 

If $D_{\phi} = {\rm diag}\,S_\phi$, then for any value of $p$ there are strategies to build $G_t$ (depending on $p$)such that  the eigenvalues $\lambda$ of
\begin{eqnarray}\label{eqn:maineigpb}
S_\phi x = \lambda D_{\phi} x
\end{eqnarray} are close to one and bounded from above and away from 0, independently of $p$ \cite{CaSiVe:14-2}. 
This guarantees the validity for NOBS basis of \eqref{eq:propNOBS.3} with $d_k$ equal to the diagonal elements of the matrix $D_\phi$ and \eqref{eq:propNOBS.7} for suitable choice of constants $\beta_*,\beta^*$ depending on the eigenvalues of \eqref{eqn:maineigpb} (see \cite{CaSiVe:14-2} for more details).

\subsection{Infinite Dimensional Algebraic Problem}\label{S:properties-A}
Let us identify the solution $u = \sum_k \hat{u}_k \phi_k$ of Problem (\ref{eq:four03})
with the vector $\bu=(\hat{u}_k)_{k\in {\cal K} }$ 
of its coefficients w.r.t. the basis $\{\phi_k\}_{k\in\mathcal{K}}$. Similarly, let us identify
the right-hand side $f$ with the vector $\bF=(\hat{f}_\ell)_{\ell \in {\cal K}}$ of its dual coefficients.
Finally, let us introduce the bi-infinite, symmetric and positive-definite stiffness matrix 
\begin{equation}\label{eq:four100}
{{\bf A}}=(a_{\ell, k})_{\ell,k \in {\cal K}} \qquad \text{with} \qquad 
a_{\ell, k}= a(\phi_k,\phi_\ell)\;.
\end{equation}
Then, Problem (\ref{eq:four03}) can be equivalently written as
\begin{equation}\label{eq:four110}
{{\bf A}} \bu = \bF \;,
\end{equation}
where, thanks to  \eqref{eq:four.1bis} and the norm equivalences \eqref{eq:propNOBS.7}-\eqref{eq:propNOBS.8}, ${\bf A}$ defines a bounded invertible operator 
in  {$\ell^2({\cal{K}})$}. 

\medskip\noindent 
{\bf Decay Properties of $\bA$ and $\bA^{-1}$}.
%
The decay of the entries of $\bA$ away from the diagonal depends on the
regularity of the coefficients $\nu$ and $\sigma$ of $L$.
%
If $\nu$ and $\sigma$ are real analytic in a neighborhood of $\Omega$, then
$a_{k,m}$ decays exponentially away from the diagonal \cite{CaNoVe:14a,CaNoVe:14b,CaSiVe:14-2}:
there exist parameters $c_L,\eta_L>0$ such that
\begin{equation}\label{eq:decay-entries-A}
|a_{k,m}| \leq \ c_L {\rm exp}(-\eta_L |k-m|) \quad\forall k,m \in \mathcal{K};
\end{equation}
we then say that $\bA$ belongs to the exponential class
$\cD_e(\eta_L)$, in particular $\bA$ is quasi-sparse.
This justifies the {\it symmetric truncation} $\bA_J$ of $\bA$ with {\it parameter $J$}, defined as
$(\mathbf{A}_J)_{\ell,k}=a_{\ell,k}$ if $|\ell-k| \leq J$
and $(\mathbf{A}_J)_{\ell,k}=0$ otherwise, which satisfies \cite{CaNoVe:14a,CaNoVe:14b,CaSiVe:14-2}
\begin{equation}\label{decay-A}
\Vert \mathbf{A}-\mathbf{A}_J \Vert \leq 
C_{\mathbf{A}} (J+1)^{d-1}{\rm e}^{-\eta_L J} 
\end{equation}
for some $C_{\mathbf{A}}>0$ depending only on $c_L$.

Most notably, the inverse matrix $\mathbf{A}^{-1}$ is also
quasi-sparse \cite{CaNoVe:14a,CaNoVe:14b,CaSiVe:14-2}. Precisely  $\bA^{-1} \in \cD_e(\bar\eta_L)$ for some $\bar{\eta}_L \in (0,\eta_L]$ and $\bar{c}_{L}$  only dependent on $c_L$ and $\eta_L$. 
Thus, there exists an explicit constant $C_{\mathbf{A}^{-1}}$ (depending only on $c_L$ and $\eta_L$) such that the symmetric truncation $(\mathbf{A}^{-1})_J$ of $\mathbf{A}^{-1}$ satisfies 

\begin{equation}\label{decayA-1}
\Vert \mathbf{A}^{-1}-(\mathbf{A}^{-1})_J \Vert \leq 
C_{\mathbf{A}^{-1}} (J+1)^{d-1} {\rm e}^{-\bar{\eta}_L J} \leq
C_{\mathbf{A}^{-1}} {\rm e}^{-\tilde{\eta}_L J} \; ,
\end{equation}
for a suitable exponent $\wt\eta_L < \bar\eta_L$.

\medskip\noindent
{\bf Galerkin Method.}
{
Given any finite index set $\Lambda \subset {\cal K}$, we define the subspace $V_{\Lambda} = {\rm span}\,\{\phi_k\, | \, k \in \Lambda \}$ of $V$;  we set $|\Lambda|= \rm{card}\, \Lambda$, so that $\rm{dim}\, V_{\Lambda}=|\Lambda|$. If $v\in V$ admits the
expansion $v = \sum_{k \in {\cal K}} \hat{v}_k \phi_k $, then we define its 
projection $P_\Lambda v$ upon $V_\Lambda$ by setting
$P_\Lambda v := \sum_{k \in \Lambda} \hat{v}_k \phi_k$.
Similarly, we define the subspace $V_{\Lambda}^* = {\rm
  span}\,\{\phi^*_k\, | \, k \in \Lambda \}$ of $V^*$. If $f$ admits an expansion $f = \sum_{k \in {\cal K}} \hat{f}_k \phi^*_k $, then we define its  projection $P^*_\Lambda f$ onto $V^*_\Lambda$ upon setting
$P^*_\Lambda f := \sum_{k \in \Lambda} \hat{f}_k \phi^*_k $.

\smallskip
Given any finite $\Lambda \subset {\cal K}$, the Galerkin approximation of \eqref{eq:four03} is defined as
\begin{equation}\label{eq:four.2}
u_\Lambda \in V_\Lambda \ \ : \quad a(u_\Lambda,v_\Lambda)= 
\langle f,v_\Lambda \rangle \qquad \forall v_\Lambda \in V_\Lambda \;.
\end{equation}

Let  $\mathbf{u}_\Lambda$ be the vector collecting the coefficients
of $u_\Lambda$ indexed in $\Lambda$; 
let $\mathbf{f}_\Lambda$ be the analogous restriction for
the vector of the coefficients of $f$. Finally, denote by $\mathbf{R}_\Lambda$ the
matrix that restricts a vector indexed in ${\cal K}$ to the portion indexed in $\Lambda$, so that
$\mathbf{R}_\Lambda^H$ is the corresponding extension matrix.
If
\begin{equation}\label{eq:four120}
\mathbf{A}_\Lambda := \mathbf{R}_\Lambda \mathbf{A} \mathbf{R}_\Lambda^H \;,
\end{equation}
then problem (\ref{eq:four.2}) can be equivalently written as
\begin{equation}\label{eq:four130}
\mathbf{A}_\Lambda \mathbf{u}_\Lambda = \mathbf{f}_\Lambda \;. 
\end{equation}

For any $w \in V_\Lambda$, we define the residual $r(w) \in V^*$ as
$$
r(w)=f-{L}w = \sum_{k \in {\cal K}} \hat{r}_k(w) \phi^*_k \;, \qquad \text{where} \qquad 
\hat{r}_k(w) = \langle f - {L}w, \phi_k \rangle = \langle f,\phi_k \rangle -a(w,\phi_k) \;.
$$
The definition \eqref{eq:four.2} of $u_\Lambda$ is equivalent  to the condition $P^*_\Lambda r(u_\Lambda) = 0$, i.e., $\hat{r}_k(u_\Lambda)=0$ for every $k \in \Lambda$.
By the continuity and coercivity of the bilinear form, one has 
\begin{equation}\label{eq:four.2.1}
\frac1{\alpha^*} \Vert r(u_\Lambda) \Vert_{V^*} \leq
\Vert u - u_\Lambda \Vert_{V} \leq 
\frac1{\alpha_*} \Vert r(u_\Lambda) \Vert_{V^*} \;,
\end{equation}
which in view of \eqref{eq:four.1bis} and \eqref{eq:propNOBS.8} can be rephrased as 
\begin{equation}\label{eq:four.2.1bis}
\frac{\beta_*}{\sqrt{{\alpha^*}}} \Vert r(u_\Lambda) \Vert_{\phi^*} \leq
\tvert u - u_\Lambda \tvert  \leq 
\frac {\beta^*}{\sqrt{\alpha_*}} \Vert r(u_\Lambda) \Vert_{\phi^*} \;.
\end{equation}
}
Therefore, if $(\hat{r}_k(u_\Lambda))_{k \in {\cal K}}$ are the coefficients of ${r}(u_\Lambda)$ with respect to the dual basis $\phi^*$, the quantity
\begin{equation*}\label{eq:four.2bis}
\Vert r(u_\Lambda) \Vert_{\phi^*}
=\left( \sum_{k \not \in \Lambda} |\hat{R}_k(u_\Lambda)|^2 \right)^{1/2} \qquad\text{with}\quad \hat{R}_k(u_\Lambda) =  \hat{r}_k(u_\Lambda) d_k^{-1/2}
\end{equation*}
is an {\it error estimator} from above and from below.
However, this quantity is not computable because it involves infinitely
many terms. We discuss {\it feasible versions} in
\cite{CaNoVe:14a,CaNoVe:14b, CaSiVe:14-2} but not here.
\medskip 

\noindent {\bf Equivalent Formulation of the Galerkin Problem.}
For future reference, we now rewrite the Galerkin problem \eqref{eq:four.2} in an equivalent (infinite-dimensional) manner.
Let  $$\mathbf{P}_\Lambda: \ell^2(\mathcal{K}) \to \ell^2(\mathcal{K})$$ be the projector operator defined as 
\[
(\mathbf{P}_\Lambda \mathbf{v})_\lambda :=
\begin{cases}
v_\lambda & \text{\rm if } \lambda\in\Lambda \;, \\
0 & \text{\rm if } \lambda\notin\Lambda \;.
\end{cases}
\]
Note that $\mathbf{P}_\Lambda$ can be represented  as a diagonal bi-infinite matrix whose diagonal elements 
are $1$ for indexes belonging to $\Lambda$, and zero otherwise.  
We set $\mathbf{Q}_\Lambda := \mathbf{I}-\mathbf{P}_\Lambda$ and 
introduce the bi-infinite matrix $\widehat{\mathbf{A}}_\Lambda:=
\mathbf{P}_\Lambda \mathbf{A} \mathbf{P}_\Lambda + \mathbf{Q}_\Lambda$ which 
is equal to $\mathbf{A}_\Lambda$ for indexes in $\Lambda$ and to the identity matrix, otherwise. 
The definitions of the projectors $\mathbf{P}_\Lambda$ and
$\mathbf{Q}_\Lambda$ yield the following property:
\begin{equation}\label{prop:inf-matrix}
\textit{If $\mathbf{A}$ is invertible with
 $\mathbf{A}\in\mathcal{D}_e(\eta_L)$, then the same holds for
$\widehat{\mathbf{A}}_\Lambda$.}
\end{equation}
Furthermore, the constants $C_{\widehat{\mathbf{A}}_\Lambda}$ and $C_{(\widehat{\mathbf{A}}_\Lambda)^{-1}}$ which appear in the inequalities \eqref{decay-A} and \eqref{decayA-1} for $\widehat{\mathbf{A}}_\Lambda$ can be bounded uniformly in $\Lambda$, since in turn they can be bounded in terms of $\eta_L$ and $c_L$, respectively.


\noindent Now, let us consider the following extended Galerkin problem: 
find $\hat{\mathbf{u}}\in\ell^2$ such that  
\begin{equation}\label{eq:inf-pb-galerkin}
\widehat{\mathbf{A}}_\Lambda \hat{\mathbf{u}}
= \mathbf{P}_\Lambda \mathbf{f}\ .
\end{equation}
Let $\mathbf{u}_\Lambda$ be the Galerkin solution to  \eqref{eq:four130};
then, it is easy to check that $\hat{\mathbf{u}}={\mathbf{R}}_\Lambda^H \mathbf{u}_\Lambda$.

\section{Adaptive Spectral Galerkin Method}\label{S:agressive-gal}
In this section we present our adaptive spectral Galerkin method,
named {\bf DYN-GAL}, that is based on a new notion of marking
strategy, namely a {\it dynamic marking}. In Section
\ref{S:static-marking} we recall the enriched D\"orfler marking
strategy, introduced in \cite{CaNoVe:14a}, which represents an enhancement of the classic D\"orfler marking strategy. In Section \ref{S:dyn-marking} we introduce the dynamic marking strategy, present {\bf DYN-GAL} and prove its quadratic convergence.  
\subsection{Static D\"orfler Marking}\label{S:static-marking}
Fix any $\theta \in (0,1)$ and set
$\Lambda_0= \emptyset$, $u_{\Lambda_0}=0$.
For $n=0,1, \dots$, assume that $\Lambda_n$ and $u_n := u_{\Lambda_n} \in V_{\Lambda_n}$
and $r_n := r(u_n) = Lu_n-f$
are already computed and choose 
$\Lambda_{n+1} := \Lambda_n \cup \partial\Lambda_n$ where the set $\partial\Lambda_n$ is built by a two-step procedure that we call $\textbf{E-D\"ORFLER}$ for {\it enriched D\"orfler}:
\medskip
\begin{itemize}
\item[] $\partial\Lambda_n = \textbf{E-D\"ORFLER} \, (\Lambda_n,\theta)$
\begin{enumerate}
\item[] $\widetilde{\partial\Lambda}_n={\bf{DORFLER}} \, (r_n,\theta)$
\item[] ${\partial\Lambda}_n={\bf ENRICH} \, (\widetilde{\partial\Lambda}_n,J)$
\end{enumerate}
\end{itemize}
\medskip
The first step is the usual {\it D\"orfler's marking}
with parameter $\theta$:
\begin{equation}\label{doerfler}
  \Vert P_{\wt{\partial \Lambda}_{n}}^* r_n \Vert_\ps
  = \Vert P_{\wt\Lambda_{n+1}}^* r_n \Vert_\ps \geq \theta
\Vert  r_n \Vert_\ps
\quad\textrm{or}\quad
\sum_{k \in \wt{\partial\Lambda}_{n}}  |\hat{R}_k(u_n)|^2 
\ \geq \ \theta^2  \sum_{k \in \mathcal{K}}  |\hat{R}_k(u_n)|^2 \;,
\end{equation}
with $\wt\Lambda_{n+1}=\Lambda_{n}\cup \wt\partial\Lambda_{n}$.
This also reads
\begin{equation}\label{doerfler-equiv}
\Vert r_n-P_{\wt\Lambda_{n+1}}^* r_n \Vert_\ps \leq \sqrt{1-\theta^2}
\Vert r_n \Vert_\ps \;
\end{equation}
and can be implemented by rearranging the coefficients
$\hat{R}_k(u_n)$ in decreasing order of modulus
and picking the largest ones ({\it greedy approach}).
However, this is only an idealized algorithm because the number of coefficients
$\hat{R}_k(u_n)$ is infinite. This marking is known to yield a
contraction property between $u_n$  and the Galerkin solution $\wt u_{n+1}\in V_{\wt\Lambda_{n+1}}$ of the form
\[
  \tvert u-\wt u_{n+1} \tvert   \leq 
  \rho(\theta) \tvert u-u_n \tvert \;,
\]
with $\rho(\theta) = \sqrt{1-\frac{\alpha_*}{\alpha^*}\theta^2}$
\cite{CaNoVe:14a,CaNoVe:14b}. When $\alpha_* < \alpha^*$ we see that
in contrast to \eqref{doerfler-equiv}, $\rho(\theta)$ is bounded below
away from $0$ by $\sqrt{1-\frac{\alpha_*}{\alpha^*}}$.

The second step of $\textbf{E-D\"ORFLER}$ is meant to remedy this
situation and hinges on the a priori structure of $\bA^{-1}$ already
alluded to in \S \ref{S:properties-A}. The goal is to augment the set
$\wt{\partial\Lambda}_{n}$ to $\partial\Lambda_{n}$
judiciously. This is contained in the following proposition (see \cite{CaNoVe:14a}) whose proof is reported here for completeness.
\begin{proposition}[enrichment]
Let $\widetilde{\partial\Lambda}_n={\bf{DORFLER}} \, (r_n,\theta)$, and let $J=J(\theta)>0$ satisfy 
\begin{equation}\label{eq:aggr2}
C_{\bA^{-1}} \textrm{e}^{-\wt\eta_L J} \leq \sqrt{\frac{1-\theta^2}{\alpha_* \alpha^*}} \;,
\end{equation}
where $C_{\bA^{-1}}$ and $\wt\eta_L$ are defined in  \eqref{decayA-1}.
Let ${\partial\Lambda}_n={\bf ENRICH} \, (\widetilde{\partial\Lambda}_n,J)$ be built as follows
$$
\partial\Lambda_{n} := \big \{ k\in\mathcal{K}: \quad\textrm{there exists } \ell\in\wt{\partial\Lambda}_{n}
\textrm{ such that }  |k-\ell|\le J \big\}.
$$ 
Then for $\Lambda_{n+1}=\Lambda_n\cup \partial\Lambda_n$, the Galerkin solution $u_{n+1}\in V_{\Lambda_{n+1}}$ satisfies 
\begin{equation}
\tvert u - u_{n+1} \tvert \leq  \bar{\rho}(\theta)  \tvert u - u_{n} \tvert \;
\end{equation}
with  
\begin{equation}\label{eq:aggr3}
\bar{\rho}(\theta)=2
\frac{\beta^*\sqrt{\alpha^*}}{\beta_*\sqrt{\alpha_*}}\sqrt{1-\theta^2}.
\end{equation}
\end{proposition}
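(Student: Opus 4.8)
The plan is to exploit that, the bilinear form being symmetric and coercive, $u_{n+1}$ is the energy-best approximation of $u$ in $V_{\Lambda_{n+1}}$, so that $\tvert u-u_{n+1}\tvert\le\tvert u-v\tvert$ for \emph{every} competitor $v\in V_{\Lambda_{n+1}}$. Moreover, for an arbitrary $w\in V$ one has $\tvert u-w\tvert^2=\langle r(w),u-w\rangle\le\Vert r(w)\Vert_{V^*}\,\alpha_*^{-1/2}\tvert u-w\tvert$, so the estimate behind \eqref{eq:four.2.1bis} actually furnishes the one-sided bound $\tvert u-w\tvert\le\frac{\beta^*}{\sqrt{\alpha_*}}\Vert r(w)\Vert_{\phi^*}$ for all $w$, not only for Galerkin solutions. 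Consequently the proposition reduces to exhibiting a single $v\in V_{\Lambda_{n+1}}$ with $\Vert r(v)\Vert_{\phi^*}\le 2\sqrt{1-\theta^2}\,\Vert r_n\Vert_{\phi^*}$: feeding this into $\tvert u-u_{n+1}\tvert\le\tvert u-v\tvert\le\frac{\beta^*}{\sqrt{\alpha_*}}\Vert r(v)\Vert_{\phi^*}$ and then using the lower bound in \eqref{eq:four.2.1bis} in the form $\Vert r_n\Vert_{\phi^*}\le\frac{\sqrt{\alpha^*}}{\beta_*}\tvert u-u_n\tvert$ reproduces exactly $\bar\rho(\theta)=2\frac{\beta^*\sqrt{\alpha^*}}{\beta_*\sqrt{\alpha_*}}\sqrt{1-\theta^2}$.

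To build such a $v$, let $\mathbf{r}_n$ be the coefficient vector of $r_n$ in the dual basis, so that $\Vert\mathbf{r}_n\Vert_{\ell^2}=\Vert r_n\Vert_{\phi^*}$ and $\mathbf{r}_n=\bA(\bu-\mathbf{u}_n)$, and set $\mathbf{g}:=\mathbf{P}_{\wt{\partial\Lambda}_n}\mathbf{r}_n$, the part of the residual selected by D\"orfler's step. Since the Galerkin residual $\mathbf{r}_n$ vanishes on $\Lambda_n$, the rewriting \eqref{doerfler-equiv} reads $\Vert\mathbf{r}_n-\mathbf{g}\Vert_{\ell^2}\le\sqrt{1-\theta^2}\,\Vert\mathbf{r}_n\Vert_{\ell^2}$. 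I would then correct $u_n$ by the restriction to $\partial\Lambda_n$ of the exact inverse applied to $\mathbf{g}$, i.e. take $v$ with coefficient vector $\mathbf{v}:=\mathbf{u}_n+\mathbf{P}_{\partial\Lambda_n}\bA^{-1}\mathbf{g}$, which lies in $V_{\Lambda_{n+1}}$ because $\mathbf{u}_n$ is supported in $\Lambda_n$ and $\wt{\partial\Lambda}_n\subseteq\partial\Lambda_n$. Writing $\mathbf{w}:=\bA^{-1}\mathbf{g}$ and using $\bA\mathbf{w}=\mathbf{g}$ together with $\mathbf{P}_{\partial\Lambda_n}=\mathbf{I}-\mathbf{Q}_{\partial\Lambda_n}$, a direct computation gives
\[
r(v)=\bF-\bA\mathbf{v}=(\mathbf{r}_n-\mathbf{g})+\bA\,\mathbf{Q}_{\partial\Lambda_n}\mathbf{w}.
\]
The first summand is the D\"orfler remainder, already bounded by $\sqrt{1-\theta^2}\,\Vert\mathbf{r}_n\Vert_{\ell^2}$, so everything hinges on controlling the second one by the same quantity.

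The key step, and the main obstacle, is to match the enrichment radius $J$ to the truncation length in \eqref{decayA-1}. Since $\mathbf{g}$ is supported in $\wt{\partial\Lambda}_n$ and $\partial\Lambda_n$ contains every index within distance $J$ of $\wt{\partial\Lambda}_n$, any $k\notin\partial\Lambda_n$ satisfies $|k-\ell|>J$ for all $\ell\in\wt{\partial\Lambda}_n$; the corresponding entries of $(\bA^{-1})_J$ therefore vanish, whence $(\mathbf{Q}_{\partial\Lambda_n}\mathbf{w})_k=\big(\mathbf{Q}_{\partial\Lambda_n}(\bA^{-1}-(\bA^{-1})_J)\mathbf{g}\big)_k$ and
\[
\Vert\mathbf{Q}_{\partial\Lambda_n}\mathbf{w}\Vert_{\ell^2}\le\Vert\bA^{-1}-(\bA^{-1})_J\Vert\,\Vert\mathbf{g}\Vert_{\ell^2}\le C_{\bA^{-1}}\,{\rm e}^{-\wt\eta_L J}\,\Vert\mathbf{r}_n\Vert_{\ell^2}
\]
by \eqref{decayA-1} and $\Vert\mathbf{g}\Vert_{\ell^2}\le\Vert\mathbf{r}_n\Vert_{\ell^2}$. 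Passing the factor $\bA$ to the dual residual norm through the spectral bound encoded in \eqref{eq:four.1bis} and the norm equivalences \eqref{eq:propNOBS.7}--\eqref{eq:propNOBS.8}, and invoking the calibration \eqref{eq:aggr2}, namely $C_{\bA^{-1}}{\rm e}^{-\wt\eta_L J}\le\sqrt{(1-\theta^2)/(\alpha_*\alpha^*)}$, I expect $\Vert\bA\,\mathbf{Q}_{\partial\Lambda_n}\mathbf{w}\Vert_{\ell^2}\le\sqrt{1-\theta^2}\,\Vert\mathbf{r}_n\Vert_{\ell^2}$. Adding the two contributions yields $\Vert r(v)\Vert_{\phi^*}\le 2\sqrt{1-\theta^2}\,\Vert r_n\Vert_{\phi^*}$ and closes the argument. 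The delicate point is precisely this last conversion: one must check that the constant incurred in moving $\bA$ from the $\ell^2$ estimate of the tail into the dual norm is exactly the factor $\sqrt{\alpha_*\alpha^*}$ absorbed by \eqref{eq:aggr2}, so that no residual dependence on the condition number of $\bA$ survives and $\bar\rho(\theta)$ can genuinely be driven to $0$ as $\theta\to1$.
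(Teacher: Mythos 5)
Your construction is essentially the paper's: the competitor is $u_n$ plus the part of $L^{-1}g_n$ supported in the enriched set, and the main ingredients --- Galerkin minimality in the energy norm, the D\"orfler remainder bound \eqref{doerfler-equiv}, and the observation that the enrichment radius $J$ makes $(\bA^{-1})_J\,\bg$ vanish outside $\partial\Lambda_n$ so that \eqref{decayA-1} controls the tail --- are all correctly deployed; the identity $r(v)=(\br_n-\bg)+\bA\,\mathbf{Q}_{\partial\Lambda_n}\bw$ is also right. The gap is exactly the step you flag at the end, and it does not close as you hope. To bound $\Vert\bA\,\mathbf{Q}_{\partial\Lambda_n}\bw\Vert_{\ell^2}$ you must pay the $\ell^2$-operator norm of $\bA$, which by \eqref{eq:four.1bis} and \eqref{eq:propNOBS.7}--\eqref{eq:propNOBS.8} is of order $\alpha^*/\beta_*^2$ (the continuity constant of $a$, not its geometric mean with the coercivity constant), and you then convert the residual back to the energy error at cost $\beta^*/\sqrt{\alpha_*}$. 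Against the budget $\sqrt{\alpha_*\alpha^*}$ supplied by \eqref{eq:aggr2} this leaves an uncancelled factor of order $\frac{\beta^*}{\beta_*}\sqrt{\alpha^*/\alpha_*}$ --- a square root of the condition number --- on the tail term. You therefore prove $\tvert u-u_{n+1}\tvert\le C\sqrt{1-\theta^2}\,\tvert u-u_n\tvert$ with a constant $C$ strictly larger than the one in \eqref{eq:aggr3}. Qualitatively this is the same theorem (the rate still tends to $0$ as $\theta\to1$, which is all the dynamic marking needs after shrinking $C_0$), but it is not the stated constant, and no choice of conversion route through the residual norm will recover it.

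The paper avoids this by never applying $\bA$ (equivalently $L$) to the tail. It applies the triangle inequality in the energy norm to $u-(u_n+y_n)=(u-u_n-w_n)+z_n$, converts only the first summand to the dual norm via $L^{-1}$ (cost $1/\sqrt{\alpha_*}$, landing directly on the D\"orfler remainder $\Vert r_n-g_n\Vert_\ps$), and bounds the tail by $\tvert z_n\tvert\le\sqrt{\alpha^*}\,\Vert z_n\Vert_V$ together with the $\ell^2$ estimate from \eqref{decayA-1}. The product $\sqrt{\alpha_*\alpha^*}$ in \eqref{eq:aggr2} is calibrated for precisely this asymmetric split, since $\sqrt{\alpha^*}\cdot\sqrt{(1-\theta^2)/(\alpha_*\alpha^*)}=\sqrt{1-\theta^2}/\sqrt{\alpha_*}$ matches the first term and yields the clean factor $2$. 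If you keep your competitor but estimate $\tvert u-v\tvert\le\tvert u-u_n-w_n\tvert+\tvert z_n\tvert$ in the energy norm, instead of assembling the full residual $r(v)$ and pushing everything through $\Vert\cdot\Vert_\ps$, your argument becomes the paper's proof.
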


\begin{proof}
Let $g_n := P^*_{\wt{\partial\Lambda}_n} r_n =  P^*_{\wt\Lambda_{n+1}} r_n$ which, according to
\eqref{doerfler-equiv}, satisfies
$$
\Vert r_n- g_n \Vert_\ps \leq \sqrt{1-\theta^2}  \Vert r_n \Vert_\ps \; .
$$
Let $w_n \in V$ be the solution of $L w_n = g_n$, which in general
will have infinitely many components, and let us split it as
$$
w_n= P_{\Lambda_{n+1}} w_n + P_{\Lambda_{n+1}^c} w_n
=: y_n + z_n \in V_{\Lambda_{n+1}} \oplus 
 V_{\Lambda_{n+1}^c} \;.
$$
The minimality property  in the energy norm
of the Galerkin solution $u_{n+1}$ over the set $\Lambda_{n+1}$ yet to be defined,
in conjunction with \eqref{eq:four.1bis}
and \eqref{eq:four.2.1bis}, implies
\begin{align*}
\tvert u - u_{n+1} \tvert &\leq  \tvert u - (u_{n}+y_{n}) \tvert \leq  
 \tvert u- u_n - w_{n} + z_{n} \tvert \\
&\leq \frac1{\sqrt{\alpha_*}} \Vert L(u- u_n - w_{n}) \Vert + \sqrt{\alpha^*}\Vert z_{n} \Vert
= \frac{\beta^*}{\sqrt{\alpha_*}} \Vert r_n - g_{n} \Vert_\ps  + \sqrt{\alpha^*} \Vert z_{n} \Vert \;,
\end{align*}
whence
$$
\tvert u - u_{n+1} \tvert \leq
\frac{\beta^*}{\sqrt{\alpha_*}}\sqrt{1-\theta^2} \, \Vert r_n \Vert_\ps
+ \sqrt{\alpha^*} \Vert z_{n}\Vert \;.
$$ 
Since
$z_n= \big( P_{\Lambda_{n+1}^c} L^{-1}P^*_{\widetilde{\partial\Lambda}_n} \big) r_n $,
we now construct $\Lambda_{n+1}^c$ to control $\|z_n\|$. If
$$
 k \in \Lambda_{n+1}^c \quad \text{and} \quad \ell \in \widetilde{\partial\Lambda}_n\qquad \Rightarrow \qquad
|k - \ell | > J \;,
$$
then we have 
$$
\Vert P_{\Lambda_{n+1}^c} L^{-1} P^*_{\widetilde{\partial\Lambda}_n} \Vert \leq
\Vert \mathbf{A}^{-1}-(\mathbf{A}^{-1})_J \Vert \leq 
C_{\bA^{-1}} \textrm{e}^{-\wt\eta_L J} \;,
$$ 
where we have used \eqref{decayA-1}. We now choose $J=J(\theta)>0$ to satisfy \eqref{eq:aggr2},
and we exploit that $\|z_n\|\le C_{\bA^{-1}} e^{-\tilde\eta_L J} \|r_n\|$ to obtain
\begin{equation}\label{eq:aggr_error_reduct}
\tvert u - u_{n+1} \tvert \leq 2\frac{\beta^*}{\sqrt{\alpha_*}}
\sqrt{1-\theta^2} \, \Vert r_n \Vert_\ps
\leq 2\frac{\beta^*\sqrt{\alpha^*}}{\beta_*\sqrt{\alpha_*}}
\sqrt{1-\theta^2} \, \tvert u - u_{n} \tvert \; ,
\end{equation}
as asserted.
\end{proof}

We observe that, as desired, the new error reduction rate
\begin{equation}
\bar{\rho}(\theta)=2
\frac{\beta^*\sqrt{\alpha^*}}{\beta_*\sqrt{\alpha_*}}\sqrt{1-\theta^2}
\end{equation}
can be made arbitrarily small by choosing $\theta$ suitably close to $1$.
This observation was already made in \cite{CaNoVe:14a,CaNoVe:14b}, but
we improve it in Section \ref{S:dyn-marking} upon choosing $\theta$ dynamically.

\begin{remark}[Cardinality of $\partial\Lambda_n$] Since we add a ball of radius $J$ around each point of
$\wt{\partial\Lambda}_{n}$ we get a crude estimate
\begin{equation}\label{card-Lambda}
|\partial\Lambda_{n}| \le |B_d(0,J) \cap \Z^d| ~ |\widetilde{\partial\Lambda}_{n}| \approx
\omega_d J^d  |\widetilde{\partial\Lambda}_{n}|,
\end{equation}
where $\omega_d$ is the measure of the $d$-dimensional Euclidean unit
ball $B(0,1)$ centered at the origin.
\end{remark}
\subsection{Dynamic D\"orfler Marking and Adaptive Spectral Algorithm}\label{S:dyn-marking}
In this section we improve on the above marking strategy upon making the choice of $\theta$ dynamic.
At each iteration $n$ let us select the D\"orfler parameter $\theta_n$ such that 
\begin{equation}\label{aux:1}
\sqrt{1-\theta_n^2}=C_0\frac{\| r_n \|_\ps}{\| r_0 \|_\ps}
\end{equation}
for a proper choice of the positive constant $C_0$ that will be made precise later.
This implies
\begin{equation}\label{eq:Jk}
J(\theta_n) = - \frac{1}{\wt{\eta}_L} \log \frac{\| r_n \|_\ps}{\| r_0 \|_\ps}  + K_1 
\end{equation}
according to \eqref{eq:aggr2}, where
$K_1:= - \frac{1}{\wt\eta_L}
\log \big( \frac{1}{\sqrt{\alpha_*\alpha^*}}\frac{C_0}{C_{\bA^{-1}}} \big) +
\delta_n$ and $\delta_n \in [0,1)$. 

We thus have the following adaptive spectral Galerkin method with
dynamic choice \eqref{aux:1} of the marking parameter
$\theta_n=(1-C_0^2 \|r_n\|_\ps^2/ \|r_0\|_\ps^2)^{1/2}$:

\begin{algotab}
  \> $\textbf{DYN-GAL}(\tol)$\\
  \> set $r_0:=f$, $\Lambda_0:=\emptyset$, $n=-1$\\
  \>  do\\
  \> \> $n \leftarrow n+1$\\
  \> \> $\partial\Lambda_{n}:= \textbf{E-D\"ORFLER} \, \big(\Lambda_n, (1-C_0^2
    \|r_n\|_\ps^2/\| r_0 \|^2_\ps)^{1/2} \big)$\\
  \> \> $\Lambda_{n+1}:=\Lambda_{n} \cup \partial\Lambda_{n}$\\
  \> \> $u_{n+1}:= \textbf{ GAL} \, (\Lambda_{n+1})$\\
  \> \> $r_{n+1}:= \textbf{ RES} \, (u_{n+1})$\\
  \> while $\Vert r_{n+1} \Vert_\ps > \tol ~\Vert r_0 \Vert_\ps$
\end{algotab}
where {\bf GAL} computes the Galerkin solution and {\bf RES} the residual.
The following result shows the quadratic convergence of {\bf DYN-GAL}.
\begin{theorem}[quadratic convergence]
Let the constant $C_0$ of \eqref{aux:1} satisfy
$C_0\le \frac 1 4 \sqrt{\frac{\alpha_*}{\alpha^*}}\frac{\beta_*}{\beta^*}$ and
$C_1 := \frac{\sqrt{\alpha^*}}{2\beta_*\|f\|_\ps}$.
Then the residual $r_n$ of {\bf DYN-GAL} satisfies 
\begin{equation}\label{quadratic-residual-1}
\frac{\|r_{n+1} \|_\ps}{2\|r_{0} \|_\ps} \leq \left(\frac{\|r_n
  \|_\ps}{2\|r_{0} \|_\ps}\right)^2 \quad\forall n\ge0,
\end{equation}
and the algorithm  terminates in finite steps for any
tolerance $\tol$. In addition, two consecutive solutions of {\bf DYN-GAL} satisfy
\begin{equation}\label{quadratic}
  \tvert u - u _{n+1} \tvert \leq  C_1 \tvert u-u_n \tvert^2
   \quad\forall n\ge0.
\end{equation}
\end{theorem}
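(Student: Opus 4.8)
The plan is to invoke the enrichment proposition at every iteration with $\theta=\theta_n$, but to extract from its proof more than the bare contraction factor $\bar\rho(\theta_n)$. The decisive observation is that the dynamic choice \eqref{aux:1} upgrades the linear estimate to a quadratic one only if one starts from the \emph{sharper} intermediate bound already established inside that proof, namely the first inequality of \eqref{eq:aggr_error_reduct},
\[
\tvert u - u_{n+1} \tvert \le 2\frac{\beta^*}{\sqrt{\alpha_*}}\sqrt{1-\theta_n^2}\,\Vert r_n \Vert_\ps ,
\]
rather than from the already-collapsed form $\bar\rho(\theta_n)\tvert u-u_n\tvert$. First I would verify that the proposition genuinely applies at each step: the value $J(\theta_n)$ prescribed by \eqref{eq:Jk} is exactly the one making \eqref{eq:aggr2} hold with $\theta=\theta_n$, so the enrichment step is legitimate and the displayed bound is available for every $n\ge0$.

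Next I would substitute the dynamic marking rule \eqref{aux:1}, $\sqrt{1-\theta_n^2}=C_0\Vert r_n\Vert_\ps/\Vert r_0\Vert_\ps$, into this bound, obtaining $\tvert u-u_{n+1}\tvert \le 2\frac{\beta^*}{\sqrt{\alpha_*}}\frac{C_0}{\Vert r_0\Vert_\ps}\Vert r_n\Vert_\ps^2$, which is quadratic in the residual. To reach \eqref{quadratic-residual-1} I would pass from the energy norm on the left to the residual norm via the lower bound in \eqref{eq:four.2.1bis}, yielding
\[
\Vert r_{n+1}\Vert_\ps \le 2\frac{\beta^*\sqrt{\alpha^*}}{\beta_*\sqrt{\alpha_*}}\,C_0\,\frac{\Vert r_n\Vert_\ps^2}{\Vert r_0\Vert_\ps}.
\]
The hypothesis $C_0\le\frac14\sqrt{\alpha_*/\alpha^*}\,\beta_*/\beta^*$ is precisely what forces the prefactor $2\frac{\beta^*\sqrt{\alpha^*}}{\beta_*\sqrt{\alpha_*}}C_0$ to be at most $\tfrac12$; dividing through by $2\Vert r_0\Vert_\ps$ then gives \eqref{quadratic-residual-1}. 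Finite termination follows by iterating this inequality: writing $x_n:=\Vert r_n\Vert_\ps/(2\Vert r_0\Vert_\ps)$ one has $x_0=\tfrac12$ and $x_{n+1}\le x_n^2$, hence $x_n\le 2^{-2^{n}}$, a doubly-exponential decay that drives $\Vert r_n\Vert_\ps$ below $\tol\,\Vert r_0\Vert_\ps$ after finitely many steps.

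For the energy-norm statement \eqref{quadratic} I would return to the quadratic-in-residual bound of the previous paragraph and this time convert \emph{both} residual factors to the energy norm through $\Vert r_n\Vert_\ps\le \frac{\sqrt{\alpha^*}}{\beta_*}\tvert u-u_n\tvert$ (again the lower bound in \eqref{eq:four.2.1bis}), producing
\[
\tvert u-u_{n+1}\tvert \le \frac{2\beta^*\alpha^* C_0}{\sqrt{\alpha_*}\,\beta_*^2\,\Vert r_0\Vert_\ps}\,\tvert u-u_n\tvert^2 .
\]
Using $\Vert r_0\Vert_\ps=\Vert f\Vert_\ps$ (since $\Lambda_0=\emptyset$ and $u_{\Lambda_0}=0$) and the admissible range of $C_0$, one checks that this constant is bounded by $C_1=\frac{\sqrt{\alpha^*}}{2\beta_*\Vert f\Vert_\ps}$, with equality attained at the maximal admissible $C_0$.

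I expect the main obstacle to be conceptual rather than computational. One must resist collapsing the enrichment estimate into its contraction form, since doing so spends one power of $\Vert r_n\Vert_\ps$ against $\tvert u-u_n\tvert$ and destroys the quadratic structure; keeping the residual-squared factor intact and only afterwards invoking the dynamic rule \eqref{aux:1} is exactly what makes the super-linear (here quadratic) behaviour surface. The one place demanding genuine care is the constant bookkeeping, so that the stated thresholds on $C_0$ and the precise value of $C_1$ emerge rather than merely some constant multiple of them.
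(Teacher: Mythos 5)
Your proposal is correct and follows essentially the same route as the paper: both start from the intermediate bound $\tvert u-u_{n+1}\tvert \le 2\frac{\beta^*}{\sqrt{\alpha_*}}\sqrt{1-\theta_n^2}\,\Vert r_n\Vert_\ps$ inside the enrichment proposition, insert the dynamic rule \eqref{aux:1}, pass to the residual via \eqref{eq:four.2.1bis} to get \eqref{quadratic-residual-1}, and then convert back to energy norms (your ordering of the constant bookkeeping differs trivially) to obtain \eqref{quadratic} with the stated $C_1$. The constants check out exactly, including that the admissible range of $C_0$ yields the prefactor $\tfrac12$ and the value $C_1=\frac{\sqrt{\alpha^*}}{2\beta_*\Vert f\Vert_\ps}$.
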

\begin{proof}
Invoke \eqref{eq:aggr_error_reduct} and \eqref{aux:1} to
figure out that
\begin{equation}\label{quadratic-residual}
\begin{aligned}
\frac{\|r_{n+1} \|_\ps}{\|r_{0} \|_\ps} 
&\leq 
\frac{\sqrt{\alpha^*}}{\beta_*} \frac{\tvert u-  u_{n+1}\tvert}{\|r_{0} \|_\ps} 
\leq
2 \sqrt{\frac{\alpha^*}{\alpha_*}}\frac{\beta^*}{\beta_*}
\sqrt{1-\theta^2_n} \frac{ \| r_n\|_\ps}{\| r_0\|_\ps}
\\
&\leq 
2 C_0 \sqrt{\frac{\alpha^*}{\alpha_*}}\frac{\beta^*}{\beta_*} \left(\frac{ \| r_n\|_\ps}{ \| r_0\|_\ps} \right)^2
\leq
\frac 1 2  \left(\frac{ \| r_n\|_\ps}{ \| r_0\|_\ps} \right)^2,
\end{aligned}
\end{equation}
which implies \eqref{quadratic-residual-1}. We thus
realize that {\bf DYN-GAL} converges quadratically and terminates in
finite steps for any tolerance $\tol$. Finally, combining \eqref{eq:four.2.1bis}
with \eqref{quadratic-residual}, we readily obtain \eqref{quadratic}
upon using $r_0=f$.
\end{proof}

\begin{remark}[super-linear rate]
If the dynamic marking parameter $\theta_n$ is chosen so that
$\sqrt{1-\theta_n^2}=C_0\left(\frac{\| r_n \|_\ps}{\| r_0
  \|_\ps}\right)^\sigma$ for some 
  $\sigma >0$, then we arrive at the rate
$\tvert u - u _{n+1} \tvert \leq  C_1 \tvert u-u_n \tvert^{1+\sigma}$.
\end{remark}
It seems to us that
the quadratic rate \eqref{quadratic} is the first one in adaptivity theory.
The relation \eqref{quadratic-residual-1} reads equivalently
\begin{equation}\label{quadratic-residual-2}
\frac{\|r_{n+1} \|_\ps}{2\|r_{0} \|_\ps} \leq \left(\frac{\|r_{n+1-k} \|_\ps}{2\|r_{0} \|_\ps}\right)^{2^k}\qquad 0\leq k \leq n+1,
\end{equation}
and implies that $\|r_n\|_\ps/\|r_0\|_\ps$ is within machine precision
in about $n=6$ iterations. This fast decay is consistent with spectral
methods. Upon termination,
we obtain the relative error
\[
\tvert u - u_n \tvert \le \frac{\beta^*\sqrt{\alpha^*}}{\beta_*
  \sqrt{\alpha_*}} \tvert u \tvert ~\tol,
\]
because $\|f\|_\ps \le \frac{\sqrt{\alpha^*}}{\beta_*}\tvert u \tvert$.

The algorithm {\bf DYN-GAL} entails exact computation of the
residual $r_n$, which in general has infinitely many terms. We do not
dwell here with inexact or feasible versions of {\bf DYN-GAL} and
refer to \cite{CaNoVe:14a,CaNoVe:14b,CaSiVe:14-2} for a full
discussion which extends to our present setting.

{
\section{Nonlinear Approximation and Gevrey Sparsity Classes}
Given any $v\in V$ we define its {\sl best $N$-term approximation error} as 
$$
E_N(v)= \inf_{\Lambda \subset {\cal K} , 
\ |\Lambda|=N} \Vert v - P_{\Lambda} v \Vert_\phi \;.
$$
We are interested in classifying functions $v$ according to the decay
law of $E_N(v)$ as $N\to\infty$, i.e., according to the ``sparsity'' of their expansions in terms of the basis 
$\{\phi_k\}_{k\in \mathcal{K}}$. 
Of special interest to us is the following exponential Gevrey class.
\begin{definition}[{exponential class of functions}]\label{def:AGev} 
For $\eta >0$ and $0 < t \leq d$,
we denote by ${\mathcal A}^{\eta,t}_G$ the subset of $V$ defined as
$$
{\mathcal A}^{\eta,t}_G { := \Big\{ v \in V \ : 
\ \Vert v \Vert_{{\mathcal A}^{\eta,t}_G}:= 
\sup_{N \geq 0} \, \left( E_N(v) \, {\rm exp}\left(\eta \omega_d^{-t/d}
  N^{t/d} \right) \right) < +\infty \Big\}  \;}
$$
where $\omega_d$ {is the measure of the $d$-dimensional Euclidean}
unit ball $B_d(0,1)$ centered at the origin.
\end{definition}

\begin{definition}[{exponential class of sequences}]\label{def:elpicGev}
{Let $\ell_G^{\eta,t}({\mathcal{K}})$ be the} subset of sequences 
${\bv} \in \ell^{2}({\mathcal{K}})$ so that \looseness=-1
$$
\Vert {\bv} \Vert_{\ell_G^{\eta,t}({\mathcal{K}})} := \sup_{n \geq 1} 
\Big( n^{(1-t/d)/2} {\rm exp}\left(\eta \omega_d^{-t/d} n^{t/d} \right)
|v_n^*| \Big) < +\infty \;,
$$
where {${\bv}^*=(v_n^*)_{n=1}^\infty$} is the non-increasing rearrangement of ${\bv}$.
\end{definition}


The relationship between  ${\mathcal A}^{\eta,t}_G$ and $\ell_G^{\eta,t}({\mathcal{K}})$ is stated
in the following \cite[Proposition 4.2]{CaNoVe:14a}.
\begin{proposition}[{equivalence of exponential classes}]\label{prop:nlg1}
Given a function $v \in V$ and the sequence ${\bv}={(\hat{v}_k \sqrt{d_k})_{k \in {\mathcal{K}}}}$ of its coefficients,
 one has  $v \in {\mathcal A}^{\eta,t}_G$ if and only if
${\bv} \in \ell_G^{\eta,t}({\mathcal{K}})$, with
$$
\|v \|_{{\mathcal A}^{\eta,t}_G} \lesssim 
\Vert {\bv} \Vert_{\ell_G^{\eta,t}({\mathcal{K}})}
\lesssim \| v \|_{{\mathcal A}^{\eta,t}_G}\,.
$$
\end{proposition}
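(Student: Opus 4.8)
The plan is to reduce the norm equivalence to a purely scalar statement about the non-increasing rearrangement $\bv^*=(v_n^*)_{n\ge1}$ and then compare sums with integrals. The crucial first observation is the identity
\[
E_N(v)^2 \;=\; \sum_{n>N}(v_n^*)^2 ,
\]
which holds because $\|v-P_\Lambda v\|_\phi^2=\sum_{k\notin\Lambda}|\hat v_k|^2 d_k=\sum_{k\notin\Lambda}|v_k|^2$ (recall $v_k=\hat v_k\sqrt{d_k}$), so that the infimum over $|\Lambda|=N$ is attained by retaining the indices of the $N$ largest coefficients, leaving exactly the tail of the rearrangement. Abbreviating $\beta:=\eta\,\omega_d^{-t/d}$ and $\gamma:=t/d\in(0,1]$, the asserted equivalence $\|v\|_{\mathcal A^{\eta,t}_G}\eqsim\|\bv\|_{\ell_G^{\eta,t}(\mathcal K)}$ becomes
\[
\sup_{N\ge0} e^{\beta N^\gamma}\Big(\sum_{n>N}(v_n^*)^2\Big)^{1/2}
\;\eqsim\;
\sup_{n\ge1} n^{(1-\gamma)/2}\,e^{\beta n^\gamma}\,|v_n^*| .
\]

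For the implication $\ell_G^{\eta,t}\Rightarrow\mathcal A^{\eta,t}_G$, I would assume the right-hand supremum is bounded by $M$, i.e. $|v_n^*|\le M\,n^{-(1-\gamma)/2}e^{-\beta n^\gamma}$, and estimate the tail by an integral. Since $x^{\gamma-1}$ is non-increasing and $e^{-2\beta x^\gamma}$ is decreasing for $\gamma\le1$, the function $x\mapsto x^{\gamma-1}e^{-2\beta x^\gamma}$ is decreasing, whence
\[
E_N(v)^2 \;=\; \sum_{n>N}(v_n^*)^2 \;\le\; M^2\sum_{n>N} n^{\gamma-1}e^{-2\beta n^\gamma}
\;\le\; M^2\int_N^\infty x^{\gamma-1}e^{-2\beta x^\gamma}\,dx \;=\; \frac{M^2}{2\beta\gamma}\,e^{-2\beta N^\gamma},
\]
the last equality via the substitution $u=x^\gamma$. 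Taking square roots gives $\|v\|_{\mathcal A^{\eta,t}_G}\lesssim M$. Note that the polynomial weight $n^{-(1-\gamma)/2}$ in the sequence norm is precisely what is absorbed by the Jacobian $x^{\gamma-1}$ of this substitution, which is why no class parameters are lost.

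The reverse implication $\mathcal A^{\eta,t}_G\Rightarrow\ell_G^{\eta,t}$ is the delicate one. Assuming $E_N(v)\le M\,e^{-\beta N^\gamma}$, I would use monotonicity of the rearrangement over a block of $L$ consecutive indices ending at $m$:
\[
L\,(v_m^*)^2 \;\le\; \sum_{n=m-L+1}^{m}(v_n^*)^2 \;\le\; E_{m-L}(v)^2 \;\le\; M^2\,e^{-2\beta (m-L)^\gamma}.
\]
The naive choice $L\eqsim m$ (e.g. the doubling $m=2N$) degrades the rate from $\beta$ to $2^{-\gamma}\beta$ in the exponent, which is fatal because both norms must share the \emph{same} $\eta$. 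The fix is to calibrate the block width as $L\eqsim m^{1-\gamma}$ with $L\le m/2$: by the mean value theorem applied to $s\mapsto s^\gamma$ one has $m^\gamma-(m-L)^\gamma\le\gamma L\,(m-L)^{\gamma-1}\le \gamma L\,(m/2)^{\gamma-1}\eqsim m^{1-\gamma}m^{\gamma-1}=O(1)$, so the exponential loss is only a bounded factor while the prefactor $1/L\eqsim m^{-(1-\gamma)}$ supplies exactly the required polynomial weight. This yields $|v_m^*|\lesssim M\,m^{-(1-\gamma)/2}e^{-\beta m^\gamma}$ for all large $m$, and the finitely many small indices are absorbed into the constant using $v\in V$ (so $E_0(v)=\|\bv\|_{\ell^2}<\infty$).

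I expect the main obstacle to be exactly this calibration of the block width $L\eqsim m^{1-\gamma}$ in the reverse direction: a block that is too wide destroys the exponential rate, while the single-term bound $|v_m^*|\le E_{m-1}(v)$ is too narrow to produce the polynomial weight $m^{-(1-\gamma)/2}$. Once the scaling $L\eqsim m^{1-\gamma}$ is identified, the remaining steps are elementary Taylor and integral comparisons, and the two one-sided bounds combine to give $\|v\|_{\mathcal A^{\eta,t}_G}\lesssim\|\bv\|_{\ell_G^{\eta,t}(\mathcal K)}\lesssim\|v\|_{\mathcal A^{\eta,t}_G}$.
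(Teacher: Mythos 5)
Your argument is correct: the identification $E_N(v)^2=\sum_{n>N}(v_n^*)^2$, the integral comparison for the tail, and the block estimate with width $L\eqsim m^{1-\gamma}$ (which preserves both the exponential rate and produces the polynomial weight $n^{-(1-t/d)/2}$) together establish the two-sided equivalence with the same class parameters. The paper itself gives no proof of this proposition, citing instead Proposition 4.2 of \cite{CaNoVe:14a}, and your reasoning is essentially the standard argument used there.
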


\noindent
For functions $v$ in ${\mathcal A}^{\eta,t}_G$ one can estimate the
minimal cardinality of a set $\Lambda$ such that
$\|v-P_\Lambda v\|_\phi \le\varepsilon$ as follows:
since $\|v-P_{\wt\Lambda} v\|_\phi > \varepsilon$ for any set $\wt\Lambda$ with cardinality
$|\wt\Lambda|=|\Lambda|-1$, we deduce
\begin{equation}\label{bound:optimal}
  \vert \Lambda \vert \leq \omega_d  \left( \frac{1}{\eta} \log 
  \frac{\Vert v \Vert_{{\mathcal A}^{\eta,t}_G}}{\varepsilon} \right)^{d/t} +1. 
\end{equation}

For the analysis of the optimality of our algorithm it is important to investigate 
the sparsity class of the image ${L}v$ for the operator ${L}$ defined
in \eqref{eq:four03}, when
the function $v$ belongs to the sparsity class
$\mathcal{A}^{\eta,t}_G$. Sparsity classes of exponential type for functionals $f \in V^*$ can be defined analogously as above,
using now the best $N$-term approximation error in $V^*$
$$
E^*_N(f)= \inf_{\Lambda \subset {\cal K} , 
\ |\Lambda|=N} \Vert f - P^*_{\Lambda} f \Vert_{\phi^*} \;.
$$

The following result is based on
\cite[Proposition 5.2]{CaNoVe:14a}.

\begin{proposition}[continuity of ${L}$ in $\mathcal{A}^{\eta,t}_G$]\label{propos:spars-res}
Let $L$ be such that the associated stiffness matrix $\mathbf{A}$ satisfies the decay condition \eqref{eq:decay-entries-A}.
Given $\eta>0$ and $t \in (0,d]$, there exist $\bar{\eta}>0$, $\bar{t} \in (0,t]$ and a constant $C_L\ge1$ such that
 \begin{equation}\label{eq:spars11bis}
\Vert Lv \Vert_{{\mathcal A}_G^{\bar{\eta},\bar{t}}} \le {C}_L
\Vert v \Vert_{{\mathcal A}_G^{\eta,t}} \qquad \forall v \in {\mathcal A}_G^{\eta,t}\;. 
\end{equation}
\end{proposition}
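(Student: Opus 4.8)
The plan is to reduce the statement about the operator $L$ acting on functions to a statement about the matrix $\mathbf{A}$ acting on sequences, then exploit the exponential decay \eqref{eq:decay-entries-A} of $\mathbf{A}$ together with the equivalence of exponential classes in Proposition~\ref{prop:nlg1}. Concretely, if $v\in\mathcal{A}^{\eta,t}_G$ has coefficient vector $\bv=(\hat v_k\sqrt{d_k})_{k\in\mathcal{K}}\in\ell_G^{\eta,t}(\mathcal{K})$, then the dual coefficients of $Lv$ are encoded (up to the diagonal scaling by $d_k^{1/2}$) by the vector $\mathbf{A}\bv$. Thus it suffices to show that $\mathbf{A}$ maps $\ell_G^{\eta,t}$ boundedly into $\ell_G^{\bar\eta,\bar t}$ for suitable $\bar\eta>0$ and $\bar t\in(0,t]$, and then translate back to the function-space norms via Proposition~\ref{prop:nlg1} (used once in each direction), absorbing the norm-equivalence constants $\beta_*,\beta^*$ into $C_L$.

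The core is the sequence-space mapping estimate, and here I would argue by truncation. Fix a target cardinality $N$ and let $\Lambda$ be the index set realizing (nearly) the best $N$-term approximation of $\bv$; by the Gevrey membership we control $\|\bv-P_\Lambda\bv\|_{\ell^2}$ by $\|\bv\|_{\ell_G^{\eta,t}}\exp(-\eta\omega_d^{-t/d}N^{t/d})$. The idea is to use the $J$-band truncation $\mathbf{A}_J$ from \eqref{decay-A} to split
\[
\mathbf{A}\bv = \mathbf{A}_J P_\Lambda\bv + (\mathbf{A}-\mathbf{A}_J)P_\Lambda\bv + \mathbf{A}(\bv-P_\Lambda\bv).
\]
The first term is supported on an enlarged index set obtained by fattening $\Lambda$ by the band-width $J$, so it is an $N'$-term object with $N'\lesssim \omega_d J^d N$ (this is exactly the enrichment cardinality bound \eqref{card-Lambda}); the last two terms are controlled in $\ell^2$ by $\|\mathbf{A}-\mathbf{A}_J\|$ and by the tail of $\bv$, respectively. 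Choosing $J\eqsim N^{t/d}$ (so that $e^{-\eta_L J}$ beats $e^{-\eta\omega_d^{-t/d}N^{t/d}}$ up to adjusting the constant in the exponent) makes the two error terms decay at the Gevrey rate while inflating the cardinality only polynomially in $N$, i.e.\ by a factor $\lesssim N^{t/d\cdot d}=N^t$. Feeding $N'$ back into the definition of $E^*_{N'}(Lv)$ and re-expressing the exponential rate in terms of $N'$ is precisely what forces the degradation $\bar t\le t$ of the secondary parameter and produces some $\bar\eta>0$ depending on $\eta,\eta_L,d$.

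The main obstacle is the bookkeeping of how the enlargement $N\mapsto N'\eqsim N^{1+?}$ transforms the rate $\exp(-\eta\omega_d^{-t/d}N^{t/d})$ into a rate of the form $\exp(-\bar\eta\omega_d^{-\bar t/d}(N')^{\bar t/d})$; one must solve for the exponent $\bar t$ that keeps the estimate self-consistent after the substitution, and verify that the polynomial prefactor $(J+1)^{d-1}$ in \eqref{decay-A} and the $n^{(1-t/d)/2}$ weight in Definition~\ref{def:elpicGev} are harmless (they are subexponential and thus absorbed by an arbitrarily small reduction of $\bar\eta$). I would balance the two competing error terms by optimizing $J$ as a function of $N$, which is the step that pins down $\bar t$ explicitly. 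Since this deterioration phenomenon is exactly the one advertised in the introduction and established in \cite[Proposition 5.2]{CaNoVe:14a}, I expect the cleanest route is to quote that result for the sequence-level statement and supply only the translation through Proposition~\ref{prop:nlg1}, keeping the truncation argument as the conceptual justification of why $\bar t<t$ is generally unavoidable.
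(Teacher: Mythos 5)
Your proposal is correct and ends up exactly where the paper does: the paper's proof consists of quoting \cite[Proposition 5.2]{CaNoVe:14a} at the sequence level (banded case: $\bar t=t$; dense case: $\bar t=\frac{t}{1+t}$, $\bar\eta=\zeta(t)\eta$), and your truncation sketch with $J\eqsim N^{t/d}$ and $N'\eqsim \omega_d J^d N\eqsim N^{1+t}$ is precisely the mechanism behind that cited result and reproduces $\bar t=\frac{t}{1+t}$. The only detail the paper makes explicit that you fold into ``adjusting the constant in the exponent'' is the case $\eta\ge\eta_L\omega_d^{t/d}$, which it handles by passing to an auxiliary $\hat\eta<\eta_L\omega_d^{t/d}$ and using the monotonicity $\Vert v\Vert_{{\mathcal A}_G^{\hat\eta,t}}\le\Vert v\Vert_{{\mathcal A}_G^{\eta,t}}$.
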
  
\begin{proof} Let $\bA$ be the stiffness matrix associated with the operator $L$. In \cite{CaNoVe:14a} it is proven 
that if $\mathbf{A}$ is banded with $2p+1$ non-zero diagonals, then the result holds with
$\bar{\eta}= \frac{\eta}{(2p+1)^{t/d}}$ and $\bar{t}= t$; on the other hand, if $\mathbf{A}\in\mathcal{D}_e(\eta_L)$ is dense, but the coefficients $\eta_L$ and $\eta$ satisfy the inequality $\eta< \eta_L  \omega_d^{t/d}$, then the result
holds with $\bar{\eta}= \zeta(t)\eta$ and $\bar{t}= \frac{t}{1+t}$, where ${\zeta(t) = \left( \frac{1+t}{2^d \, \omega_d^{1+t}} \right)^{\frac{t}{d(1+t)}}}$. Finally, if $\eta \geq \eta_L  \omega_d^{t/d}$, we introduce an arbitrary $\hat{\eta}>0$ satisfying $\hat{\eta}< \eta_L  \omega_d^{t/d}$; then the result holds with $\bar{\eta}= \zeta(t)\hat{\eta}$ and $\bar{t}= \frac{t}{1+t}$, since $\Vert v \Vert_{{\mathcal A}_G^{\hat{\eta},t}} \leq \Vert v \Vert_{{\mathcal A}_G^{\eta,t}} $.
\end{proof}

Keeping into account that $\zeta(t) \leq 1$ for $1 \leq d \leq 10$ (see again \cite{CaNoVe:14a}), this result indicates that the residual is expected to belong to a
less favorable sparsity class than the one of the
solution. Counterexamples in \cite{CaNoVe:14a} show that
\eqref{eq:spars11bis} cannot be improved.
}

Finally, we discuss the sparsity class of the residual
$r=r(u_\Lambda)$ for any Galerkin solution $u_\Lambda$.
\begin{proposition}[{sparsity class of the residual}]\label{prop:unif-bound-res-exp}
{Let $\mathbf{A}\in\mathcal{D}_e(\eta_L)$ and
$\bA^{-1} \in\mathcal{D}_e(\bar\eta_L)$, for constants $\eta_L>0$
and $\bar\eta_L\in(0,\eta_L]$ so that \eqref{decay-A} and
\eqref{decayA-1} hold.
If $u \in {\mathcal A}^{\eta,t}_G$ for some $\eta>0$ and $t \in
(0,d]$,
then there exist suitable positive constants $\tilde{\eta} \leq \eta$ and 
$\tilde{t} \leq t$ such that
$r(u_\Lambda) \in {\mathcal A}_G^{\tilde{\eta},\tilde{t}}$ for any index
set $\Lambda$ and
}
$$
\Vert r(u_\Lambda) \Vert_{{\mathcal A}_G^{\tilde{\eta},\tilde{t}}} \lesssim
\Vert u \Vert_{{\mathcal A}^{\eta,t}_G} \;.
$$
\end{proposition}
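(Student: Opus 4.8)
The plan is to reduce the sparsity estimate for the residual $r(u_\Lambda)=Lu-Lu_\Lambda$ to already-established facts, namely Proposition~\ref{propos:spars-res} (continuity of $L$ on Gevrey classes) and Proposition~\ref{prop:nlg1} (equivalence of function and sequence classes), while carefully controlling the dependence on $\Lambda$. Writing $r(u_\Lambda)=L(u-u_\Lambda)$, the naive route would be to apply Proposition~\ref{propos:spars-res} to $u-u_\Lambda$; but $u-u_\Lambda$ need not lie in $\mathcal{A}^{\eta,t}_G$ with a $\Lambda$-uniform norm bound, so this direct approach fails to give the desired uniformity. Instead I would split the residual additively as $r(u_\Lambda)=Lu - Lu_\Lambda$ and treat the two images separately, or better, work at the level of the coefficient sequence and exploit the structure $r(u_\Lambda)=\mathbf{A}(\mathbf{u}-\mathbf{u}_\Lambda)$ together with the Galerkin orthogonality $P^*_\Lambda r(u_\Lambda)=0$.

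First I would recall that $u\in\mathcal{A}^{\eta,t}_G$ gives, by Proposition~\ref{prop:nlg1}, a sequence $\mathbf{u}\in\ell^{\eta,t}_G(\mathcal{K})$ with comparable norm. Since $Lu=f$ has image coefficients $\mathbf{F}=\mathbf{A}\mathbf{u}$, Proposition~\ref{propos:spars-res} applied to $u$ yields $f=Lu\in\mathcal{A}^{\bar\eta,\bar t}_G$ with $\|f\|_{\mathcal{A}^{\bar\eta,\bar t}_G}\lesssim\|u\|_{\mathcal{A}^{\eta,t}_G}$, for the parameters $\bar\eta,\bar t$ produced there. Next, because $r(u_\Lambda)=f-Lu_\Lambda$ and $\hat r_k(u_\Lambda)=0$ for all $k\in\Lambda$, the residual is obtained from $f$ by \emph{zeroing out} the coefficients indexed in $\Lambda$ and modifying those outside; the key observation is that passing to such a modified sequence cannot increase the best $N$-term approximation error beyond a controlled factor. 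Concretely, the nonincreasing rearrangement of the residual coefficients is dominated (up to the norm-equivalence constants $\beta_*,\beta^*$ and the operator bound for $\mathbf{A}$) by that of $f$, which is what keeps the estimate uniform in $\Lambda$ and accounts for the possibly-degraded parameters $\tilde\eta\le\eta$, $\tilde t\le t$.

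The main obstacle, and the step deserving the most care, is establishing the $\Lambda$-uniformity of the sequence-class bound for $r(u_\Lambda)$. One must show that although $u_\Lambda$ and hence $r(u_\Lambda)$ depend on $\Lambda$, the decay of the ordered residual coefficients is controlled purely in terms of the decay of $f$ (equivalently of $u$) and the fixed quasi-sparsity constants $c_L,\eta_L,\bar\eta_L$ of $\mathbf{A}$ and $\mathbf{A}^{-1}$, with \emph{no} dependence on $|\Lambda|$ or on the location of $\Lambda$. I expect this to follow by writing the nonzero part of $r(u_\Lambda)$ through the extended-problem identity $\hat{\mathbf{u}}=\widehat{\mathbf{A}}_\Lambda^{-1}\mathbf{P}_\Lambda\mathbf{f}$ of \eqref{eq:inf-pb-galerkin}, combined with property \eqref{prop:inf-matrix} that $\widehat{\mathbf{A}}_\Lambda\in\mathcal{D}_e(\eta_L)$ with $\Lambda$-uniform constants, so that $\mathbf{A}(\mathbf{u}-\hat{\mathbf{u}})$ inherits a Gevrey-class sequence bound via Proposition~\ref{propos:spars-res} applied to the operator whose matrix is $\mathbf{A}\widehat{\mathbf{A}}_\Lambda^{-1}\mathbf{P}_\Lambda$. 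Once this uniform coefficient decay is in hand, Proposition~\ref{prop:nlg1} converts it back to the asserted bound $\|r(u_\Lambda)\|_{\mathcal{A}^{\tilde\eta,\tilde t}_G}\lesssim\|u\|_{\mathcal{A}^{\eta,t}_G}$, completing the argument.
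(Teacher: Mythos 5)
Your proposal follows essentially the same route as the paper: bound $\|f\|_{\mathcal{A}_G^{\bar\eta,\bar t}}\lesssim\|u\|_{\mathcal{A}_G^{\eta,t}}$ via Proposition~\ref{propos:spars-res}, then express $\mathbf{r}_\Lambda=(\mathbf{I}-\mathbf{A}\widehat{\mathbf{A}}_\Lambda^{-1}\mathbf{P}_\Lambda)\mathbf{f}$ and apply the same proposition to this $\Lambda$-uniformly quasi-sparse matrix (the paper makes this explicit by the block computation yielding $\mathbf{R}=\bigl(\begin{smallmatrix}\mathbf{O}&\mathbf{O}\\-\mathbf{B}^T\mathbf{A}_\Lambda^{-1}&\mathbf{I}\end{smallmatrix}\bigr)$). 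The only caveat is that your intermediate heuristic that the rearrangement of the residual coefficients is pointwise dominated by that of $f$ is not what actually holds (it would wrongly preserve the class parameters $\bar\eta,\bar t$); your final argument via the quasi-sparse matrix and the consequent degradation to $\tilde\eta,\tilde t$ is the correct one and matches the paper.
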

\begin{proof}
Proposition \ref{propos:spars-res} yields the existence of $\bar{\eta}>0$ and $\bar{t} \in (0,t]$ such that
\begin{equation}\label{eq:spars11ter}
\Vert f \Vert_{{\mathcal A}_G^{\bar{\eta},\bar{t}}}= \Vert Lu \Vert_{{\mathcal A}_G^{\bar{\eta},\bar{t}}} 
\lesssim \Vert u \Vert_{{\mathcal A}_G^{\eta,t}} \;.
\end{equation}
In order to bound $\Vert r(u_\Lambda) \Vert_{{\mathcal A}_G^{\tilde{\eta},\tilde{t}}}$ in terms of $\Vert f \Vert_{{\mathcal A}_G^{\bar{\eta},\bar{t}}}$,
let us write
$$
{\bf r}_\Lambda = \mathbf{A}({\bf u} - {\bf u}_\Lambda ) = {\bf f} - \mathbf{A} {\bf u}_\Lambda \;,
$$
then use $ {\bf u}_\Lambda = ({\widehat{\mathbf{A}}_\Lambda})^{-1} (\mathbf{P}_\Lambda{\bf f})$ from
\eqref{eq:inf-pb-galerkin} to get
$$
{\bf r}_\Lambda = {\bf f} - \mathbf{A} ({\widehat{\mathbf{A}}_\Lambda})^{-1} (\mathbf{P}_\Lambda{\bf f}).
$$
Now, assuming just for simplicity that the indices in $\Lambda$ come first  (this can be realized by a permutation),
we have
$$
 \mathbf{A} = \left(
 \begin{matrix}
 \mathbf{A}_\Lambda & \mathbf{B} \\[5pt]
 \mathbf{B}^T & \mathbf{C}
\end{matrix}
\right)
\qquad
\text{and}
\qquad
\widehat{\mathbf{A}}_\Lambda = \left(
 \begin{matrix}
 \mathbf{A}_\Lambda & \mathbf{O} \\[5pt]
 \mathbf{O}^T & \mathbf{I}
\end{matrix}
\right),
\qquad
\text{whence}
\qquad
(\widehat{\mathbf{A}}_\Lambda)^{-1} = \left(
 \begin{matrix}
 (\mathbf{A}_\Lambda)^{-1} & \mathbf{O} \\[5pt]
 \mathbf{O}^T & \mathbf{I}
\end{matrix}
\right)
$$
Setting $\mathbf{f}=(\mathbf{f}_\Lambda \ \ \mathbf{f}_{\Lambda^c} )^T$, so that $\mathbf{P}_\Lambda{\bf f} = (\mathbf{f}_\Lambda \ \ \mathbf{0} )^T$, we have
\begin{eqnarray*}
\mathbf{A} ({\widehat{\mathbf{A}}_\Lambda})^{-1} (\mathbf{P}_\Lambda{\bf f}) &=&
\left(
 \begin{matrix}
 \mathbf{A}_\Lambda & \mathbf{B} \\[5pt]
 \mathbf{B}^T & \mathbf{C}
\end{matrix}
\right)
\left(
 \begin{matrix}
 (\mathbf{A}_\Lambda)^{-1} & \mathbf{O} \\[5pt]
 \mathbf{O}^T & \mathbf{I}
\end{matrix}
\right)
\left(
 \begin{matrix}
 \mathbf{f}_\Lambda \\[5pt]
\mathbf{0}
\end{matrix}
\right) \\[5pt]
&=& 
\left(
 \begin{matrix}
 \mathbf{A}_\Lambda & \mathbf{B} \\[5pt]
 \mathbf{B}^T & \mathbf{C}
\end{matrix}
\right)
\left(
 \begin{matrix}
 (\mathbf{A}_\Lambda)^{-1}  \mathbf{f}_\Lambda \\[5pt]
\mathbf{0}
\end{matrix}
\right)
\ = \ 
\left(
 \begin{matrix}
 \mathbf{f}_\Lambda \\[5pt]
\mathbf{B}^T  (\mathbf{A}_\Lambda)^{-1}  \mathbf{f}_\Lambda
\end{matrix}
\right)
\end{eqnarray*}
Then,
$$
{\bf r}_\Lambda = \left(
 \begin{matrix}
 \mathbf{0} \\[5pt]
\mathbf{f}_{\Lambda^c} - \mathbf{B^T}  (\mathbf{A}_\Lambda)^{-1}  \mathbf{f}_\Lambda
\end{matrix}
\right)
=
\left(
 \begin{matrix}
 \mathbf{O} & \mathbf{O} \\[5pt]
  -\mathbf{B}^T  (\mathbf{A}_\Lambda)^{-1} & \mathbf{I}
\end{matrix}
\right)  
\left(
 \begin{matrix}
 \mathbf{f}_\Lambda \\[5pt]
\mathbf{f}_{\Lambda^c}
\end{matrix}
\right)
=: \mathbf{R} \mathbf{f}  \;.
$$
Now, since $\mathbf{A}\in\mathcal{D}_e(\eta_L)$ and
$(\bA_\Lambda)^{-1}  \in\mathcal{D}_e(\bar\eta_L)$, it is easily seen that $\mathbf{R}\in\mathcal{D}_e(\tilde{\eta}_L)$ with $\tilde{\eta}_L = \bar{\eta}_L$ if $\bar{\eta}_L< \eta_L$, or $\tilde{\eta}_L < \eta_L$ arbitrary if  $\bar{\eta}_L= \eta_L$.

Finally, we apply Proposition \ref{propos:spars-res} to the operator $R$ defined by the matrix $\mathbf{R}$, 
obtaining the existence of constants $\tilde{\eta}>0$ and $\tilde{t} \in (0,\bar{t}\,]$ such that
$$
\Vert r(u_\Lambda) \Vert_{{\mathcal A}_G^{\tilde{\eta},\tilde{t}}} \lesssim
\Vert f \Vert_{{\mathcal A}_G^{\bar{\eta},\bar{t}}} \;,
$$
whence the result.
\end{proof}

\section{Optimality Properties of {\bf DYN-GAL}}
In this section we derive an exponential rate of convergence for
$\tvert u -u_n\tvert $ in terms of the number of degrees of freedom
$\Lambda_n$ activated by {\bf DYN-GAL} and assess the computational
work necessary to achieve this rate. This is made precise in the
following theorem.
\begin{theorem}[exponential convergence rate] \label{thm5.1}
Let $u\in \mathcal{A}_G^{\eta,t}$ where the Gevrey class
$\mathcal{A}_G^{\eta,t}$ is introduced in Definition \ref{def:AGev}.
Upon termination of {\bf DYN-GAL}, the iterate $u_{n+1}\in V_{\Lambda_{n+1}}$
and set of active coefficients $\Lambda_{n+1}$
satisfy $\tvert u-u_{n+1}\tvert \le \frac{\beta^*}{\sqrt{\alpha_*}} \|f\|_{\phi^*}\tol$ and
\begin{equation}\label{exp-rate}
  |\Lambda_{n+1}| \le \omega_d
  \left( \frac{1}{\eta_*}
  \log \frac{C_* \frac{\|u\|_{\mathcal{A}_G^{\eta,t}}}{\|f\|_{\phi^*}}}{\tol}
  \right)^{d/t_*},
\end{equation}
with parameters $C_*>0$, $\eta_*<\eta$ and $t_*<t$. Moreover, if
the number of arithmetic operations needed to solve a
linear system scales linearly with its dimension, then the workload
$\mathcal{W}_\tol$ of {\bf DYN-GAL} upon completion satisfies
\begin{equation}\label{workload}
  \mathcal{W}_\tol \le \omega_d
  \left(
  \frac{1}{\eta^*} \log
  \frac{C^* \frac{\|u\|_{\mathcal{A}_G^{\eta,t}}}{\|f\|_{\phi^*}}}{\tol^4|\log|\log\tol||^{-1}},
  \right)^{d/t_*}
\end{equation}
%
where $\eta^*<\eta_*$ and $C^*>C_*$ but $t_*$ remains the same as in \eqref{exp-rate}.
\end{theorem}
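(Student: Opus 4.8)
The plan is to combine the quadratic convergence of {\bf DYN-GAL} (Theorem on quadratic convergence) with the cardinality estimate \eqref{card-Lambda}, the dynamic choice of $J(\theta_n)$ in \eqref{eq:Jk}, and the sparsity class of the residual from Proposition \ref{prop:unif-bound-res-exp}. The key insight, as outlined in the introduction, is that super-linear convergence converts the problem of controlling $|\Lambda_{n+1}|$ into a tractable geometric-series argument. I would proceed in three stages: first the error/tolerance statement, then the cardinality bound \eqref{exp-rate}, and finally the workload bound \eqref{workload}.

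\emph{Error bound and cardinality.} The bound on $\tvert u - u_{n+1}\tvert$ upon termination is immediate: the stopping criterion gives $\|r_{n+1}\|_\ps \le \tol\,\|r_0\|_\ps = \tol\,\|f\|_\ps$, and \eqref{eq:four.2.1bis} together with $r_0 = f$ converts this into the asserted energy-norm estimate. For the cardinality, I would sum the increments $|\partial\Lambda_k|$ over $k=0,\dots,n$. By \eqref{card-Lambda} we have $|\partial\Lambda_k| \lesssim \omega_d J(\theta_k)^d\,|\wt{\partial\Lambda}_k|$. Two quantities must be controlled: the radius $J(\theta_k)$ and the D\"orfler cardinality $|\wt{\partial\Lambda}_k|$. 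From \eqref{eq:Jk}, $J(\theta_k) \eqsim |\log(\|r_k\|_\ps/\|r_0\|_\ps)|$, which by quadratic convergence \eqref{quadratic-residual-2} decays like $2^{-(n-k)}|\log(\|r_n\|_\ps/\|r_0\|_\ps)|$ as $k$ runs backward from $n$. For $|\wt{\partial\Lambda}_k|$, I would invoke Proposition \ref{prop:unif-bound-res-exp}: since $r_k = r(u_{\Lambda_k}) \in \mathcal{A}_G^{\tilde\eta,\tilde t}$ with norm bounded uniformly by $\|u\|_{\mathcal{A}_G^{\eta,t}}$, the minimal-cardinality estimate \eqref{bound:optimal} (applied to the accuracy $\sqrt{1-\theta_k^2}\,\|r_k\|_\ps$ at which D\"orfler selects $\wt{\partial\Lambda}_k$) yields $|\wt{\partial\Lambda}_k| \lesssim \omega_d\big(\frac{1}{\tilde\eta}|\log(\cdots)|\big)^{d/\tilde t}$, again a power of $|\log(\|r_k\|_\ps/\|r_0\|_\ps)|$.

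\emph{Assembling the sum.} Writing $L_k := |\log(\|r_k\|_\ps/\|r_0\|_\ps)|$, the quadratic relation \eqref{quadratic-residual-2} gives $L_k \eqsim 2^{-(n-k)} L_{n+1}$ (up to the additive $\log 2$ normalization), so each increment is bounded by a constant times $L_k^{d + d/\tilde t}$, a geometric sequence in $k$ dominated by its last term. The geometric series therefore sums to a constant multiple of $L_{n+1}^{d+d/\tilde t}$. However, the clean form \eqref{exp-rate} has exponent $d/t_*$ rather than $d + d/\tilde t$; the reconciliation is that $L_{n+1}\lesssim |\log\tol|$ while the final accuracy satisfies $\tvert u - u_{n+1}\tvert \eqsim \|r_{n+1}\|_\ps$, so absorbing the extra logarithmic powers and the factor $J^d$ into a slightly degraded exponent $t_* < \tilde t \le t$ produces \eqref{exp-rate}. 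One then expresses $L_{n+1}$ through the termination relation and the identity $\|r_0\|_\ps = \|f\|_\ps$, isolating $\|u\|_{\mathcal{A}_G^{\eta,t}}/\|f\|_{\phi^*}$ inside the logarithm and defining $C_*$, $\eta_*$, $t_*$ accordingly.

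\emph{Workload and the main obstacle.} For \eqref{workload} I would account for the cost of each {\bf GAL} and {\bf RES} call. Under the linear-solver assumption, solving on $V_{\Lambda_{k+1}}$ costs $\lesssim |\Lambda_{k+1}|$, but the residual and matrix-truncation operations cost an extra factor tied to the bandwidth $J(\theta_k)^d$ and to the number of iterations needed to resolve $r_k$ to the dynamically required accuracy; this is the source of the degraded argument $\tol^4|\log|\log\tol||^{-1}$ and the smaller exponent $\eta^* < \eta_*$ and larger constant $C^* > C_*$, while $t_*$ is preserved because the dominant cardinality scaling is unchanged. The main obstacle I anticipate is the bookkeeping in this last stage: tracking how the per-iteration workload, which involves both the growing cardinality $|\Lambda_{k+1}|$ and the shrinking-but-coupled truncation parameters $J(\theta_k)$ and the inner solver tolerances, aggregates across the $O(\log|\log\tol|)$ outer iterations without inflating the exponent $d/t_*$. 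Getting the polynomial-in-$1/\tol$ and iterated-logarithm factors precisely right, so that they collapse into the single logarithmic argument displayed in \eqref{workload}, requires careful control of the geometric decay rates and is where I expect the delicate constant-chasing to concentrate.
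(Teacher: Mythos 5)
Your treatment of the termination bound and of the cardinality estimate \eqref{exp-rate} follows essentially the same route as the paper: bound $J(\theta_k)\lesssim|\log(\|r_k\|_\ps/\|r_0\|_\ps)|$, bound $|\wt{\partial\Lambda}_k|$ by combining the minimality of D\"orfler marking with the residual's membership in $\cA_G^{\bar\eta,\bar t}$ (Proposition \ref{prop:unif-bound-res-exp}) and the dynamic choice \eqref{aux:1}, multiply by $\omega_d J(\theta_k)^d$ via \eqref{card-Lambda}, and sum the resulting sequence, which is dominated by its last term thanks to \eqref{quadratic-residual-2}. One remark: your ``reconciliation'' of the exponent is exact rather than an absorption of stray logarithmic powers --- the product of $J(\theta_k)^d$ and $|\wt{\partial\Lambda}_k|$ contributes $(\log(\cdot))^{\,d+d/\bar t}=(\log(\cdot))^{\,d/t_*}$ with $t_*=\bar t/(1+\bar t)$, which is precisely how the paper defines $t_*$.

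The genuine gap is in the workload estimate \eqref{workload}. You attribute the degradation from $\tol$ to $\tol^4|\log|\log\tol||^{-1}$ to the cost of residual evaluation, matrix truncation with bandwidth $J(\theta_k)^d$, and inner solver tolerances. That is not the mechanism, and the paper explicitly excludes such costs from its model (feasible/inexact residuals are deferred to the references, and each iteration is charged only $C_\#|\Lambda_{n+1}|$ for the linear solve). The actual source is purely combinatorial: by quadratic convergence each relative residual $\|r_n\|_\ps/\|r_0\|_\ps$ falls into a distinct interval $I_\ell=(\varepsilon_{\ell+1},\varepsilon_\ell]$ with $\varepsilon_{\ell+1}=\varepsilon_\ell^2$ and $\varepsilon_1=\tfrac12$, so there are at most $L\lesssim\log|\log\tol|$ iterations; applying \eqref{exp-rate} with $\varepsilon_{\ell+1}$ in place of $\tol$ and summing over $\ell$, using $\sum_\ell a_\ell^{d/t_*}\le\bigl(\sum_\ell a_\ell\bigr)^{d/t_*}$ for $d/t_*\ge1$, one collects $\sum_{\ell\le L}\log\varepsilon_1^{-2^{\ell}}\le\log\varepsilon_1^{-2^{L+1}}\le\log\tol^{-4}$ (since $\tol\le\varepsilon_1^{2^{L-1}}$) together with $L$ copies of $\log\bigl(C_*\|u\|_{\cA_G^{\eta,t}}/\|f\|_{\phi^*}\bigr)$; this is where both the power $4$ and the $|\log|\log\tol||$ factor originate. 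Without this bookkeeping your plan does not produce \eqref{workload}, and with the extra per-iteration costs you propose to include you would in general obtain a larger bound than the theorem asserts.
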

\begin{proof}
We proceed in several steps.

\medskip
1. {\it Expression of $J(\theta_k)$:}  
Our first task is to simplify the expression \eqref{eq:Jk} for
$J(\theta_k)$, namely to absorb the term $K_1$:
there is $C_2>1$ such that
\begin{equation}\label{J_k}
J(\theta_k)\leq \frac{C_2}{{\wt{\eta}_L}} \Big\vert \log
\frac{\| r_k\|_\ps}{2\|r_0\|_\ps} \Big\vert .
\end{equation}
In fact, if $C_2$ is given by $C_2 = 1+ \frac{\wt\eta_L \max(0,K_1)}{\log 2}$,
then
\[
K_1 \leq \frac{C_2 - 1}{\wt\eta_L} \log 2 \le
\frac{C_2 - 1}{\wt\eta_L} \left\vert \log \frac{\|r_k\|_\ps}{2\|r_0\|_\ps}\right\vert 
\]
because $\|r_k\|_\ps \le \|r_0\|_\ps$ for all $k\ge0$ according to
\eqref{quadratic-residual}. This in turn implies \eqref{J_k}
\[
J(\theta_k) \le\frac{1}{\wt\eta_L}  \left\vert  \log \frac{\|r_k\|_\ps}{2\|r_0\|_\ps} \right\vert
+ \frac{C_2 - 1}{\wt\eta_L} \left\vert \log \frac{\|r_k\|_\ps}{2\|r_0\|_\ps}\right\vert 
= \frac{C_2}{{\wt{\eta}}_L} \left\vert \log
\frac{\| r_k\|_\ps}{2\|r_0\|_\ps} \right\vert .
\]

\medskip
2. {\it Active set $\Lambda_k$:}
We now examine the output $\Lambda_k$ of {\bf E-D\"ORFLER}. 
Employing the minimality of D\"orfler marking and \eqref{doerfler-equiv}, we deduce
\begin{equation}\label{non-lin:dual}
E^*_{\vert\widetilde{\partial\Lambda_k} \vert} (r_k)=\Vert r_k-P_{\widetilde{\partial\Lambda_k}}^* r_k \Vert_\ps \leq \sqrt{1-\theta_k^2}\Vert r_k \Vert_\ps ,
\end{equation}
which clearly implies $E^*_{\vert\widetilde{\partial\Lambda_k}\vert-1}> \sqrt{1-\theta_k^2}\Vert r_k \Vert_\ps$.
The latter inequality, together with the Definition \ref{def:elpicGev}
of $\|r_k\|_{\cA_G^{\bar{\eta},\bar{t}}}$, yields 
%
\begin{equation*}
 \Vert r_k \Vert_{{\mathcal A}^{\bar\eta,\bar t}_G} >
 \sqrt{1-\theta_k^2}\Vert r_k \Vert_\ps\, {\rm exp}\left(\bar{\eta}
 \omega_d^{-\bar{t}/d} (\vert\widetilde{\partial\Lambda}_k\vert-1)^{\bar{t}/d} \right).
\end{equation*}
We note that this, along with $\vert\widetilde{\partial\Lambda}_k\vert\geq 1$,
ensures  
$\frac{\|r_k\|_{\cA_G^{\bar{\eta},\bar{t}}}}{\sqrt{1-\theta_k^2}\|r_k\|_\ps}>1$ whence
\[
|\widetilde{\partial\Lambda_k}| \leq \omega_d \left(
\frac{1}{\bar{\eta}} \log
\frac{\|r_k\|_{\cA_G^{\bar{\eta},\bar{t}}}}{\sqrt{1-\theta_k^2}\|r_k\|_\ps}\right)^{d/\bar{t}}+1.
\]

We now recall the membership of the residual $r_k := r(u_k)$ to the Gevrey
class $\mathcal{A}_G^{\bar \eta, \bar t}$ for all $k\ge 0$,
established in Proposition \ref{prop:unif-bound-res-exp}: there exists
$C_3>0$ independent of $k$ and $u$ such that
\begin{equation*}
\|r_k\|_{\mathcal{A}_G^{\bar \eta, \bar t}} \le C_3 \|u\|_{\mathcal{A}_G^{\eta, t}}.
\end{equation*}
Combining this with the dynamic marking \eqref{aux:1} implies
\[
\frac{\|r_k\|_{\mathcal{A}_G^{\bar \eta, \bar t}}}{\sqrt{1-\theta_k^2} \|r_k\|_\ps}
= \frac{\|f\|_\ps \|r_k\|_{\mathcal{A}_G^{\bar \eta, \bar t}}}{C_0\|r_k\|_\ps^2}
\le \frac{C_4\|f\|_\ps \|u\|_{\mathcal{A}_G^{\eta, t}}}{\|r_k\|_\ps^2},
\]
with $C_4=C_3/C_0$, whence
\[
|\widetilde{\partial\Lambda_k}| \leq \omega_d \left(
\frac{1}{\bar{\eta}} \log
\frac{C_4 \|f\|_\ps \|u\|_{\mathcal{A}_G^{\eta, t}}}{\|r_k\|_\ps^2}\right)^{d/\bar{t}}+1.
\]
We let $C_5$ satisfy
$1=\omega_d \big(\frac{1}{\bar\eta} \log C_5 \big)^{d/\bar t}$, and
use that $\bar t < t \le d$, to obtain the simpler expression
\begin{equation}\label{aux-card:1}
|\widetilde{\partial\Lambda}_k| \leq
\omega_d \left(
\frac{1}{\bar{\eta}} \log
\frac{C_4 C_5 \|f\|_\ps \|u\|_{\mathcal{A}_G^{\eta, t}}}{\|r_k\|_\ps^2}\right)^{d/\bar{t}}.
\end{equation}

On the other hand, in view of \eqref{J_k},
the enrichment step \eqref{card-Lambda} of {\bf E-D\"ORFLER} yields
\begin{equation}\label{aux-card:2}
|\partial\Lambda_k| \le C_6 \omega_d J(\theta_k)^d |\widetilde{\partial\Lambda}_k|
\le \frac{C_6C_2^d \omega_d^2}{\wt\eta_L^{d}\bar\eta^{d/\bar t}}
\left\vert \log \frac{\| r_k\|_\ps}{2\|r_0\|_\ps} \right\vert^d 
\left (
\log \frac{ C_4 C_5\|f\|_\ps \|u\|_{\cA_G^{\eta,t}}}{\|r_k\|_\ps^2}\right)^{d/\bar{t}}.
\end{equation}
We exploit the quadratic convergence \eqref{quadratic-residual-1} to
write for $k\le n$
\begin{equation}\label{aux-card:4}
\left| \log\frac{\|r_k\|_\ps}{2\|r_0\|_\ps}\right|^d \le
2^{d(k-n)} \left| \log \frac{\|r_n\|_\ps}{2\|r_0\|_\ps} \right|^d 
=2^{d(k-n)} \left( \log \frac{2\| f\|_\ps}{\|r_n\|_\ps} \right)^d. 
\end{equation}
Using the bound $\|f\|_{\phi^*}\le \|f\|_{\cA_G^{\bar\eta,\bar{t}}}
= \|r_0\|_{\cA_G^{\bar\eta,\bar{t}}} \leq C_3 \|u\|_{\cA_G^{\eta,t}}$ in the two previous inequalities
yields
\[
\log \frac{ C_4 C_5\|f\|_\ps \|u\|_{\cA_G^{\eta,t}}}{\|r_k\|_\ps^2} \leq
2 \log \frac{ C_7 \|u\|_{\cA_G^{\eta,t}}}{\|r_k\|_\ps} \qquad \text{and} \qquad 
 \log \frac{2\| f\|_\ps}{\|r_n\|_\ps} \leq  \log \frac{2C_3 \|u\|_{\cA_G^{\eta,t}}}{\|r_n\|_\ps}, 
 \]
 with $C_7=(C_3C_4C_5)^{1/2}$. Introducing the constants 
\[
C_8 = \max (2C_3, C_7),
\qquad
\frac1{\hat \eta} = \frac{2^{d/\bar t}C_6C_2^d \omega_d}{\wt\eta_L^{d}\bar\eta^{d/\bar t}},
\qquad
t_* = \frac{\bar t}{1+\bar t},
\]
the derived upper bound for $|\partial\Lambda_k|$ can be simplified as follows:
\[
|\partial\Lambda_k| \le 2^{d(k-n)}\frac{\omega_d}{\hat\eta}
\left( \log \frac{C_8 \|u\|_{\cA_G^{\eta,t}}}{\|r_n\|_\ps} \right)^{d/t_*}.
\]

Recalling now that $|\Lambda_0|=0$ and for $n\ge0$
\begin{equation*}
\vert \Lambda_{n+1}\vert = \sum_{k=0}^n \vert \partial \Lambda_k \vert 
\end{equation*}
we have 
\begin{equation*}
  \vert \Lambda_{n+1}\vert \leq \frac{\omega_d}{\hat\eta}
  \left( \sum_{k=0}^{n} 2^{d(k-n)}\right) 
  \left ( \log\frac{C_8 \|u\|_{\cA_G^{\eta,t}}}{\|r_n\|_\ps} \right)^{d/t_*}.
\end{equation*}
This can be written equivalently as
\begin{equation}\label{card:Lambda_n}
\vert \Lambda_{n+1}\vert \leq \omega_d \left( \frac1{{\eta_*}} \log \frac{C_8 \|u\|_{\cA_G^{\eta,t}}}{\Vert r_n \Vert_\ps}  \right)^{d/t_*}
\end{equation}
with $\eta_*=\left(\frac{\hat\eta}{\sum_{k=1}^\infty 2^{-dk}}\right)^{t_*/d}$. At last, we make use of
$\|r_n\|_{\phi^*} > \tol \|f\|_{\phi^*}$ to 
get the desired estimate \eqref{exp-rate} with $C_*=C_8$.

\medskip
3. {\it Computational work:}
Let us finally discuss the total computational work $\mathcal{W}_\tol$
of {\bf DYN-GAL}. We start with some useful notation.
We set $\delta_n:=\frac{\|r_n\|_{\ps}}{\|r_0\|_\ps}$ for
$n\ge0$ and $\varepsilon_{\ell+1}=\varepsilon_\ell^2$ for $\ell\geq 1$
with $\varepsilon_1=\frac 1 2$. We note that there exists an integer $L>0$
such that $\varepsilon_{L+1} < \varepsilon \le \varepsilon_{L}$ with
$\varepsilon$ being the tolerance of {\bf DYN-GAL}.
In addition, we observe that for every iteration $n>0$ of {\bf
DYN-GAL} there exists $\ell>0$ such that 
$\delta_n \in I_\ell:=(\varepsilon_{\ell+1}, \varepsilon_\ell]$ and that for
each interval $I_\ell$
there exists at most one $\delta_n \in I_\ell$ because
$\delta_{n+1}\le\frac12 \delta_n^2$ according to \eqref{quadratic-residual-1}. 
Finally, to each interval $I_\ell$ we associate the following computational work
$W_\ell$ to find and store $u_{n+1}$
\[
W_\ell=
\begin{cases}
C_\# \vert \Lambda_{n+1}\vert & \text{\rm if there exists $n$ such that }
\delta_n\in I_\ell \\
0 & \text{\rm otherwise}\;.
\end{cases}
\]
This assumes that the number of arithmetic operations needed to solve the
linear system for $u_n$ scales linearly with its dimension and $C_\#$
is an absolute constant that may depend on the specific solver.
The total computational work of {\bf DYN-GAL} is bounded by 
\[
\mathcal{W}_\tol = \sum_{\ell=1}^L W_\ell.
\]
We now get a bound for $\mathcal{W}_\tol$.
In view of \eqref{exp-rate} we have
\[
W_\ell \leq C_\# \omega_d \left( \frac1{{\eta_*}} \log \frac{C_*
  \frac{\|u\|_{\cA_G^{\eta,t}}}{\|f\|_{\phi^*}}}{\varepsilon_{\ell+1} }  \right)^{d/t_*} =
  C_\# \omega_d \left( \frac1{{\eta_*}} \log \frac{C_*
  \frac{\|u\|_{\cA_G^{\eta,t}}}{\|f\|_{\phi^*}}}{\varepsilon_1^{2^{\ell}}}  \right)^{d/t_*}.
\]
Therefore, upon adding over $\ell$ and using that $d/t_*\ge1$, we
obtain
\begin{align*}
\mathcal{W}_\tol 
\leq \frac{C_\#\omega_d}{\eta_*^{d/t_*}}
  \left( \sum_{\ell=1}^L \log C_* \frac{\|u\|_{\cA_G^{\eta,t}}}{\|f\|_{\phi^*}}
  + \sum_{\ell=1}^L \log \varepsilon_1^{-2^{\ell}} \right)^{d/t_*}
  \leq \frac{C_\#\omega_d}{\eta_*^{d/t_*}}
  \left( \log \frac{L C_* \frac{\|u\|_{\cA_G^{\eta,t}}}{\|f\|_{\phi^*}}}{\varepsilon_1^{2^{L+1}}}  \right)^{d/t_*}.
\end{align*}
Since $\tol\le \varepsilon_{L}=\varepsilon_1^{2^{L-1}}$, we deduce
$\tol^4\leq \varepsilon_1^{2^{L+1}}$ and 
$L \le \frac{\log \frac{|\log \tol|}{\log 2}}{\log 2} +1
\le C_9 \log|\log\tol|$. Inserting this bound in the preceding
expression yields
\begin{equation}
\mathcal{W}_\tol \leq \omega_d \left( \frac1{\eta^*} \log
\frac{C^* \frac{\| u\|_{\cA_G^{\eta,t}}}{\|f\|_{\phi^*}}}{\tol^4 |\log \vert\log
  \tol \vert \vert^{-1}}  \right)^{d/t_*}.
\end{equation}
with
\[
\eta^* = \frac{\eta_*}{C_\#^{t_*/d}},
\quad
C^* = C_9 C_*,
\]
which is the asserted estimate \eqref{workload}. The proof is thus complete.
\end{proof}

\begin{remark} Note that the bound on the workload, given in \eqref{workload}, is at most an absolute multiple of the bound, given in \eqref{exp-rate}, on the number of active coefficients.
\end{remark}

\begin{remark}[super-linear convergence]
If $\sqrt{1-\theta_n^2}=C_0\left(\frac{\| r_n \|_\ps}{\| r_0
  \|_\ps}\right)^\sigma$ with $\sigma >0$, then \eqref{exp-rate} still holds with the same parameters $\eta_*,t_*$.
\end{remark}
\begin{remark}[algebraic class]
Let us consider the case when $u$ belongs to the algebraic class 
\[
  \mathcal{A}_B^s:= { \Big\{ v \in V \ : \ \Vert v \Vert_{{\mathcal A}^{s}_B}:= 
    \sup_{N \geq 0} \, E_N(v) \, \big(N+1\big)^{s/d}  < +\infty \Big\}},
\]
which is related to Besov regularity.  We can distinguish two cases:
\begin{enumerate}
\item
$\mathbf{A}$ belongs to an exponential class but the residuals belong to an algebraic class; 
\item $\mathbf{A}$ belongs to an algebraic class $\mathcal{D}_a(\eta_L)$,  i.e. there exists a constant $c_L>0$ 
such that its elements satisfy $| a_{\ell,k} | \leq  c_L (1+ \vert \ell - k \vert )^{-\eta_L}$, and the residual belongs to an algebraic class.
\end{enumerate}
We now study the optimality properties of {\bf DYN-GAL} for
these two cases. Let us first
observe that whenever the residuals belong to the algebraic class $\mathcal{A}_B^s$, the bound 
\eqref{aux-card:1} becomes
\begin{equation}\label{aux-card:3}
|\widetilde{\partial\Lambda}_k| \lesssim \| u -u_k\|^{-2d/s}.
\end{equation}
This results from the dynamic marking  \eqref{aux:1} together
with \eqref{non-lin:dual} in the algebraic case.

Let us start with Case $1.$ Using  \eqref{aux-card:3}  and
\eqref{J_k}, which is still valid here as we assume that $\mathbf{A}$
belongs to an exponential class, the bound \eqref{aux-card:2} is replaced by
\begin{equation}
|{\partial\Lambda}_k| \lesssim \| u -u_k\|^{-2d/s}\Big\vert \log
\frac{\| r_k\|_\ps}{2\|r_0\|_\ps} \Big\vert^d \lesssim  \| u
-u_k\|^{-2d/s}2^{d(k-n)} \left( \log \frac{2\| f\|_\ps}{\|r_n\|_\ps}
\right)^d ,
\end{equation}
where in the last inequality we have employed \eqref{aux-card:4}. Hence, we have 
\begin{eqnarray}
\vert \Lambda_{n+1}\vert &=& \sum_{k=0}^n \vert \partial \Lambda_k \vert 
\lesssim  \left( \log \frac{2\| f\|_\ps}{\|r_n\|_\ps} \right)^d \sum_{k=0}^n \| u- u_n\|^{-2\frac d s 2^{k-n}} 2^{d(k-n)}\nonumber\\
&\lesssim&   \left( \log \frac{2\| f\|_\ps}{\|r_n\|_\ps} \right)^d \| u- u_n\|^{-2\frac d s}  \sum_{k=0}^n 2^{d(k-n)}\nonumber\\
&\lesssim&   \left( \log \frac{2\| f\|_\ps}{\|r_n\|_\ps} \right)^d\| u- u_n\|^{-2\frac d s}   \lesssim  \left( \log \frac{2\| f\|_\ps}{\|u-u_n\|} \right)^d \|u-u_{n+1}\|^{- \frac d s }\nonumber
\end{eqnarray}
where in the last inequality we have employed the quadratic convergence of  {\bf DYN-GAL}.
The above result implies that {\bf DYN-GAL} is optimal
for Case 1 (up to a logarithmic factor).

Let us now consider Case $2.$ Since $\mathbf{A}$ belongs to an algebraic class, \eqref{J_k} is replaced by 
\begin{equation*}\label{aux-card:5}
J(\theta_k)\eqsim \| u-u_k\|^{-1/s}.
\end{equation*}
Thus, the bound \eqref{aux-card:2} is replaced by
\begin{equation*}
|{\partial\Lambda}_k| \lesssim \| u -u_k\|^{-3d/s},
\end{equation*}
which implies 
\begin{equation*}
\vert \Lambda_{n+1}\vert = \sum_{k=0}^n \vert \partial \Lambda_k \vert 
\lesssim  \|u-u_n\|^{-3 \frac d s }
\lesssim  \|u-u_{n+1}\|^{-\frac 3 2 \frac d s }
\end{equation*}
where in the last inequality we have again employed the
quadratic convergence of  {\bf DYN-GAL}.
This result is \emph{not} optimal for Case 2,
due to the factor 2/3 multiplying $s$ in the exponent.
We recall that the algorithm {\bf FA-ADFOUR} of
\cite{CaNoVe:14a} is similar to {\bf DYN-GAL}  
but with static marking parameter $\theta$. The theory of {\bf FA-ADFOUR}
requires neither a restriction on $\theta$ nor coarsening and is proven to
be optimal in the algebraic case; see \cite[Theorem $7.2$]{CaNoVe:14a}.

\end{remark}

\bigskip 
\noindent {\bf Acknowledgements.}
%
The first and the fourth authors are partially supported by the Italian research grant  {\sl Prin 2012}  2012HBLYE4  ``Metodologie innovative nella modellistica differenziale numerica''. 
The first author is also partially supported by Progetto INdAM-GNCS 2015 ``Tecniche di riduzione computazionale per problemi di fluidodinamica e interazione fluido-struttura". 
The second author is partially supported by NSF grants DMS-1109325 and DMS-1411808.
The fourth author is also partially supported by Progetto INdAM - GNCS 2015 ``Non-standard numerical methods for geophysics". 


\end{document}